\newenvironment{lyxlist}[1]
{\begin{list}{}
{\settowidth{\labelwidth}{#1}
 \setlength{\leftmargin}{\labelwidth}
 \addtolength{\leftmargin}{\labelsep}
 }}
{\end{list}}
\theoremstyle{definition}
    \newtheorem{thm}{Theorem}
    \newtheorem{prop}[thm]{Proposition}
    \newtheorem{lem}[thm]{Lemma}
    \newtheorem{cor}[thm]{Corollary}
    \newtheorem{dfn}[thm]{Definition}
    \newtheorem{example}[thm]{Example}
\theoremstyle{remark}
    \newtheorem*{notation*}{Notation}
    \newtheorem*{remark*}{Remark}
\theoremstyle{definition}
    \newtheorem*{example*}{Example}
\newcommand\ordering{\ensuremath{\prec}}
\newcommand\ordlt{\ensuremath{\prec}}
\newcommand\ordgt{\ensuremath{\succ}}
\newcommand\ordle{\ensuremath{\preceq}}
\newcommand\polyring{\ensuremath{\mathcal{R}}}
\newcommand\SPoly[1]{\ensuremath{S_{\ordlt}\left({#1}\right)}}
\newcommand\lcm{\ensuremath{\mathrm{lcm}}}
\newcommand\lt[1]{\ensuremath{\mathrm{lt}_{\ordlt}\left(#1\right)}}
\newcommand\lc[1]{\ensuremath{\mathrm{lc}_{\ordlt}\left(#1\right)}}
\newcommand\closer{\ensuremath{\ \Diamond}}
\theoremstyle{plain}
\newtheorem*{maintheorem}{Main Theorem}
\begin{document}

\title[An Extension of Buchberger's Criteria]{An Extension of Buchberger's Criteria for Gr\"obner basis decision}

\author{John Perry}

\email{john.perry@usm.edu}

\urladdr{www.math.usm.edu/perry}

\address{University of Southern Mississippi\\
Department of Mathematics, Box 5045\\
Hattiesburg, MS 39406 USA}

\keywords{Gr\"obner bases, Buchberger's Criteria}

\subjclass{13P10}

\thanks{Part of this work was conducted during the Special Semester on Gr\"obner
bases, February 1--July 31, 2006, organized by RICAM, Austrian Academy
of Sciences, and RISC, Johannes Kepler University, Linz, Austria.}

\begin{abstract}
Two fundamental questions in the theory of Gr\"obner bases are decision
(``Is a basis $G$ of a polynomial ideal a Gr\"obner basis?'') and
transformation (``If it is not, how do we transform it into a Gr\"obner
basis?'') This paper considers the first question. It is well-known
that $G$ is a Gr\"obner basis if and only if a certain set of polynomials
(the $S$-poly\-nomials) satisfy a certain property. In general there
are $m\left(m-1\right)/2$ of these, where $m$ is the number of polynomials
in $G$, but criteria due to Buchberger and others often allow one
to consider a smaller number.

This paper presents two original results. The first is a new characterization
theorem for Gr\"obner bases that makes use of a new criterion that
extends Buchberger's Criteria. The second is the identification of
a class of polynomial systems $G$ for which the new criterion has
dramatic impact, reducing the worst-case scenario from $m\left(m-1\right)/2$
$S$-poly\-nomials to $m-1$.
\end{abstract}
\maketitle

\section{\label{sec:Introduction}Introduction}

Gr\"obner bases ease significantly the investigation of many important
questions in commutative algebra and algebraic geometry. Fundamental
questions in the theory of Gr\"obner bases include (1) the decision
problem, \emph{Is a basis $G$ of a polynomial ideal a Gr\"obner
basis?} and (2) the transformation problem, \emph{If it is not, how
do we transform it into one?} This paper considers question (1).

Buchberger \cite{Buchberger65} showed that $G$ is a Gr\"obner basis
if and only if the $S$-poly\-nomial of every pair of the polynomials
in $G$ satisfies a certain property. Ordinarily, if $G$ contains
$m$ polynomials, one has to examine $m\left(m-1\right)/2$ $S$-poly\-nomials.
Buchberger and others
\cite{Buchberger65,KollreiderBuchberger76,Buchberger79,GM88,BackelinFroeberg1991,MMT92,CKR02}
have found criteria on the leading terms of $G$ that often detect
the property before building the $S$-poly\-nomial, reducing significantly
the number of $S$-poly\-nomials that require inspection.

The authors of \cite{HP05} discovered a new criterion on leading
terms that is useful in some Gr\"obner bases of three polynomials.
In Section~\ref{sec:EFC} we generalize this criterion to Gr\"obner
bases of arbitrary size. The result, called the \emph{Extended Criterion}
(EC), is a new, non-trivial criterion that also extends Buchberger's
criteria. The Main Theorem uses the new criterion to formulate a new
characterization theorem for Gr\"obner bases. In Section~\ref{sec:Proof}
we prove the Main Theorem. In Section~\ref{sec: class of systems}
we identify a class of polynomial systems where Buchberger's Criteria
have no effect, whereas EC reduces the maximum number of $S$-poly\-nomials
required to answer question~(1) from $m\left(m-1\right)/2$ to $m-1$.

\section{\label{sec:EFC}The Extended Criterion}

We begin with a review of the essential notation and background material.
Standard references in the theory of Gr\"obner bases are \cite{BWK93,Adams94,CLO97}.

Fix a commutative ring $\polyring$ of polynomials in $x_{1}$, $x_{2}$,
\ldots{}, $x_{n}$ over a field, and an admissible term ordering~$\ordering$
over the terms of $\polyring$. (In this paper, a term is a monomial
whose coefficient is 1.) For any non-zero $p\in\polyring$, we denote
the leading term of $p$ with respect to $\ordlt$ by $\lt{p}$, and
the leading coefficient by $\lc{p}$.

\begin{dfn}[Gr\"obner Basis]\label{def: GB}We say that $G\in\polyring^{m}$
is a \emph{Gr\"obner basis with respect to $\ordlt$} if for every
polynomial~$p$ in the ideal $I$ generated by $G$ there exists
some $g\in G$ such that $\lt{g}\mid\lt{p}$.\closer
\end{dfn}
Gr\"obner bases provide an elegant framework that allows one to decide
easily many otherwise difficult problems in commutative algebra and
algebraic geometry \cite{Buchberger70,BWK93,CLO97,CLO98,KR00}. From
an algorithmic perspective, however, Definition~\ref{def: GB} is
not useful; after all, $p$ ranges over the infinite set~$I$, so
it is impossible to decide whether $G$ is a Gr\"obner basis by inspecting
every $p\in I$. Bruno Buchberger launched the theory of Gr\"obner
bases by developing a characterization that requires finitely many
inspections.

Before stating Buchberger's characterization, we need a little more
notation. For any $f,g\in\polyring$, write\[
\sigma_{f,g}=\frac{\lcm\left(\lt{f},\lt{g}\right)}{\lt{f}},\]
and define the $S$\emph{-poly\-nomial} of $f$ and $g$ as\[
\SPoly{f,g}=\lc{g}\sigma_{f,g}f-\lc{f}\sigma_{g,f}g.\]

Let $G\in\polyring^{m}$ and $p\in\polyring$, with $p\neq 0$.
We say that $p$ \emph{reduces to zero with respect to $G$}
if $p=0$ or there exist monomials $q_{1}$, $q_{2}$, \ldots{}, $q_{r}$
    and integers $\nu_{1}$, $\nu_{2}$,
        \ldots{},
        $\nu_{r}\in\left\{ 1,2,\ldots,m\right\} $
such that

\begin{itemize}
\item $p=q_{1}g_{\nu_{1}}+q_{2}g_{\nu_{2}}+\cdots+q_{r}g_{\nu_{r}}$; 
\item $\lt{q_{1}}\lt{g_{\nu_{1}}}$ is a term of $p$; and
\item for $i>1$, each $\lt{q_{i}}\lt{g_{\nu_{i}}}$ is a term of $p-q_{1}g_{\nu_{1}}-q_{2}g_{\nu_{2}}-\ldots-q_{i-1}g_{\nu_{i-1}}$.
\end{itemize}
\noindent If $p\neq0$ and no $\lt{g_{j}}$ divides a term of $p$,
then $p$ \emph{does not reduce to zero with respect to} $G$.

The notions of $S$-poly\-nomials and reduction to zero allowed Buchberger
to formulate the following \cite{Buchberger65}.

\begin{thm}
[Buchberger's Characterization]\label{thm: Buchberger's characterization}Let
$G\in\polyring^{m}$. The following are equivalent.
\begin{lyxlist}{(B)}
\item [{(A)}] $G$ is a Gr\"obner basis with respect to $\ordering$.
\item [{(B)}] For every $i,j$ such that $1\leq i<j\leq m$, $\SPoly{g_{i},g_{j}}$
reduces to zero with respect to $G$.\closer
\end{lyxlist}
\end{thm}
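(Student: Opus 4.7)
My plan is to establish the equivalence by proving each implication separately; the forward direction will be an essentially routine application of Definition~\ref{def: GB}, while the reverse will require the classical minimal-representation argument.

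For (A) $\Rightarrow$ (B), I fix indices $i<j$ and observe that $\SPoly{g_i,g_j}$ lies in the ideal generated by $G$. By Definition~\ref{def: GB}, some $\lt{g_{\nu_1}}$ divides $\lt{\SPoly{g_i,g_j}}$, which lets me choose a monomial $q_1$ so that $q_1 g_{\nu_1}$ cancels the leading term of $\SPoly{g_i,g_j}$. The remainder still belongs to $I$ and has strictly smaller leading term; iterating and invoking the well-ordering property of $\ordlt$ terminates the process at zero, which by construction produces the representation demanded by the definition of reduction to zero.

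For (B) $\Rightarrow$ (A), I take an arbitrary nonzero $p\in I$, write $p=\sum_i h_i g_i$, and choose a representation that minimizes $T=\max_i \lt{h_i g_i}$ with respect to $\ordlt$. If $T=\lt{p}$, then some summand realizes this leading term without cancelling, so $\lt{g_i}\mid\lt{p}$ for that $i$, and $G$ is a Gr\"obner basis. Otherwise $T\ordgt\lt{p}$, which forces the leading coefficients of the summands achieving $T$ to sum to zero. The classical cancellation lemma then rewrites that offending vanishing sum as a combination of $S$-polynomials $\SPoly{g_i,g_j}$, each of whose leading terms lies strictly below $T$. Substituting the reduction-to-zero expressions supplied by hypothesis (B) for each such $S$-polynomial produces a new representation of $p$ whose maximum leading-term product is strictly below $T$, contradicting minimality.

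The principal obstacle is the bookkeeping inside that last substitution: I must verify that inserting the reduction-to-zero expressions does not reintroduce any summand whose leading-term product equals $T$. The precise formulation of reduction to zero in the excerpt---requiring each new $\lt{q_k}\lt{g_{\nu_k}}$ to appear in an already-formed remainder---is designed exactly to enforce this bound, but carrying out the estimate term-by-term through the cancellation-lemma rewriting is where the genuine work of the proof lies.
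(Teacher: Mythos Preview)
Your outline is the standard textbook argument and is correct in essence; the forward direction is routine, and for the reverse you have identified both the minimal-$T$ strategy and the genuine point of effort (verifying that the substituted reduction-to-zero data stay strictly below $T$ after multiplication by the monomial coming from the cancellation lemma). One small quibble: for the existence of a representation minimizing $T$, you implicitly use that $\ordlt$ is a well-ordering, which is worth saying explicitly.

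However, there is nothing in the paper to compare against. The paper states Theorem~\ref{thm: Buchberger's characterization} as background material, attributes it to Buchberger~\cite{Buchberger65}, and moves on without proof; the paper's own contributions begin with the Extended Criterion and the Main Theorem. So your proposal is not being measured against any argument the author supplies---you have simply (and correctly) reconstructed the classical proof that the paper takes for granted.
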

Unlike $p$ in Definition~\ref{def: GB}, $i$ and $j$ in (B) range
over finitely many integers. Moreover, deciding whether a polynomial
reduces to zero with respect to $G$ requires a finite number of steps.
This gives Buchberger's Characterization a decided computational advantage
over Definition~\ref{def: GB}.

Nevertheless, it is usually burdensome to check all the $S$-poly\-nomials.
Buchberger developed two criteria
\cite{Buchberger65,KollreiderBuchberger76}
that modify condition (B) of Buchberger's Characterization:

\begin{thm}
\label{thm:Buchberger's Criteria}Let $G\in\polyring^{m}$. The following
are equivalent.
\begin{lyxlist}{(B)}
\item [{(A)}] $G$ is a Gr\"obner basis with respect to $\ordering$.
\item [{(B)}] For every $i,j$ such that $1\leq i<j\leq m$, one of the
following holds:

\begin{lyxlist}{(B3)}
\item [{(B0)}] $\SPoly{g_{i},g_{j}}$ reduces to zero with respect to $G$.
\item [{(B1)}] $\lt{g_{i}}$ and $\lt{g_{j}}$ are relatively prime.
\item [{(B2)}] There exist $k_{1},\ldots,k_{n}$ such that $i=k_{1}$,
$j=k_{n}$,\\
each of the $\lt{g_{k_{\ell}}}$ divides $\lcm\left(\lt{g_{i}},\lt{g_{j}}\right)$,
and\\
each $\SPoly{g_{k_{\ell}},g_{k_{\ell+1}}}$ reduces to zero with respect
to $G$.\closer
\end{lyxlist}
\end{lyxlist}
\end{thm}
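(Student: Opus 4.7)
\medskip

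\noindent\textbf{Proof proposal.} The plan is to derive this theorem from Theorem~\ref{thm: Buchberger's characterization}: the direction (A)$\Rightarrow$(B) is immediate, since by Buchberger's Characterization each $\SPoly{g_{i},g_{j}}$ reduces to zero, so (B0) is satisfied for every pair. For (B)$\Rightarrow$(A), it suffices by Theorem~\ref{thm: Buchberger's characterization} to show that whenever (B1) or (B2) holds for a pair $(i,j)$, the polynomial $\SPoly{g_{i},g_{j}}$ reduces to zero with respect to $G$.

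First I would dispose of (B1). After normalizing so that leading coefficients are $1$, write $g_{i}=\lt{g_{i}}+\tilde g_{i}$ and $g_{j}=\lt{g_{j}}+\tilde g_{j}$ with $\lt{\tilde g_{i}}\ordlt\lt{g_{i}}$ and similarly for $\tilde g_{j}$. When $\lt{g_{i}}$ and $\lt{g_{j}}$ are relatively prime, $\sigma_{i,j}=\lt{g_{j}}$ and $\sigma_{j,i}=\lt{g_{i}}$, and a direct calculation gives
\[
\SPoly{g_{i},g_{j}}=\tilde g_{i}g_{j}-\tilde g_{j}g_{i}.
\]
The leading terms of $\tilde g_{i}g_{j}$ and $\tilde g_{j}g_{i}$ are each strictly $\ordlt\lt{g_{i}}\lt{g_{j}}=\lcm(\lt{g_{i}},\lt{g_{j}})$, so by ordering the summands appropriately this expression is a reduction-to-zero representation of $\SPoly{g_{i},g_{j}}$ with respect to $\{g_{i},g_{j}\}$, hence with respect to $G$.

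For (B2), I would argue by induction on the chain length $n$. The case $n=2$ is just (B0). For the inductive step I would use the standard lcm-telescoping identity: setting $\mu=\lcm(\lt{g_{i}},\lt{g_{j}})$ and $\mu_{p,q}=\lcm(\lt{g_{p}},\lt{g_{q}})$, whenever $\lt{g_{k}}\mid\mu$ one has (up to leading-coefficient scalars)
\[
\frac{\mu}{\lt{g_{i}}}g_{i}-\frac{\mu}{\lt{g_{j}}}g_{j}=\frac{\mu}{\mu_{i,k}}\!\left(\sigma_{i,k}g_{i}-\sigma_{k,i}g_{k}\right)+\frac{\mu}{\mu_{k,j}}\!\left(\sigma_{k,j}g_{k}-\sigma_{j,k}g_{j}\right).
\]
Rewritten in terms of $S$-poly\-nomials, this expresses $\SPoly{g_{i},g_{j}}$ as a monomial combination of $\SPoly{g_{k_{\ell}},g_{k_{\ell+1}}}$ where each monomial multiplier times the corresponding $\lcm$ is divisible by (and hence bounded by) $\mu$. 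Splicing the given reduction-to-zero expressions for each $\SPoly{g_{k_{\ell}},g_{k_{\ell+1}}}$ into this combination, and reordering summands so that the leading-term chain required by the definition of reduction to zero is respected, produces the desired representation for $\SPoly{g_{i},g_{j}}$.

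The main obstacle I anticipate is the bookkeeping in the last step: verifying that when the individual reduction-to-zero representations are multiplied by monomial factors and added, the resulting sum can be rearranged so that each successive $\lt{q_{i}}\lt{g_{\nu_{i}}}$ is actually a term of the appropriate partial sum, rather than merely bounded above by $\mu$. This is where the ``term of $p-q_{1}g_{\nu_{1}}-\cdots-q_{i-1}g_{\nu_{i-1}}$'' condition in the definition of reduction has to be checked carefully, and where one typically argues that any cancellation produces a still lower leading term that can itself be reduced using the elements of $G$.
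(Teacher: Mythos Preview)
The paper does not actually prove Theorem~\ref{thm:Buchberger's Criteria}; it is stated as background with references to Buchberger's original work. However, the paper's proof of the Main Theorem (which properly extends this statement) indicates the route the author has in mind: one shows that each of (B0)--(B2) yields an $S$-\emph{representation} of $\SPoly{g_i,g_j}$ with respect to $G$, and then invokes Lazard's Characterization (Theorem~\ref{thm: Lazards characterization}) rather than Buchberger's Characterization. That is exactly how the paper handles (B0)--(B2) inside the proof of the Main Theorem.

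Your approach instead tries to feed everything back into Theorem~\ref{thm: Buchberger's characterization}, and this is where the gap lies. For (B1), the identity $\SPoly{g_i,g_j}=\tilde g_i g_j-\tilde g_j g_i$ is correct, and it immediately shows that $\SPoly{g_i,g_j}$ has an $S$-representation with respect to $\{g_i,g_j\}$. But ``ordering the summands appropriately'' does \emph{not} turn this into a reduction to zero in the sense of the paper's definition: when you expand $\tilde g_i,\tilde g_j$ into monomials and start subtracting, each subtraction of $q\,g_j$ introduces lower terms of $g_j$ multiplied by $q$, and there is no reason these are themselves divisible by $\lt{g_i}$ or $\lt{g_j}$. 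The clean argument is: the identity gives an $S$-representation for the unique $S$-polynomial of $\{g_i,g_j\}$, so by Lazard's Characterization $\{g_i,g_j\}$ is a Gr\"obner basis, and \emph{then} $\SPoly{g_i,g_j}$ reduces to zero.

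For (B2) you have correctly identified the obstacle, but it is a genuine one, not just bookkeeping. The telescoping identity combined with the given reductions to zero produces a $t$-representation with $t\ordlt\lcm(\lt{g_i},\lt{g_j})$, i.e.\ an $S$-representation; it does not in general produce something satisfying the ``$\lt{q_i}\lt{g_{\nu_i}}$ is a term of the running remainder'' condition, and there is no elementary rearrangement that repairs this. Again the fix is to stop at the $S$-representation and apply Lazard's Characterization directly to conclude that $G$ is a Gr\"obner basis. Once you make this switch---which is precisely what the paper does in its proof of the Main Theorem---both (B1) and (B2) go through without difficulty.
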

These criteria, along with adaptations of them, are widely used in
both decision and transformation
\cite{Buchberger85,GM88,BackelinFroeberg1991,MMT92,CKR02}.
On this account, we make the following definition.

\begin{dfn}
[Buchberger's Criteria]Let $t_{1}$, $t_{2}$, and $t_{3}$ be terms
of $\polyring$. If $t_{1}$ and $t_{2}$ are relatively prime, we
say that $\left(t_{1},t_{2}\right)$ satisfies \emph{Buchberger's
gcd Criterion}. If $t_{2}\mid\lcm\left(t_{1},t_{3}\right)$, we
say that $\left(t_{1},t_{2},t_{3}\right)$ satisfies \emph{Buchberger's
lcm Criterion}.\closer
\end{dfn}

A number of researchers have studied how to apply Buchberger's Criteria
as efficiently as possible \cite{GM88,CKR02}. The algorithm described
by Gebauer and M\"oller is considered a standard benchmark algorithm
for approaches to question (2) posed in the introduction.

The main contribution of this paper is to introduce the following
criterion, which addresses question (1) by means of a new characterization
theorem (the Main Theorem) as well as the identification of a class
of polynomial systems for which the criterion gives a dramatic reduction
in the number of $S$-poly\-nomials required to answer the question
(Section~\ref{sec: class of systems}).

\begin{dfn}
[The Extended Criterion]\label{def: EFC}Let $t_{1}$, \ldots{},
$t_{m}$ be terms of $\polyring$. We say that $\left(t_{1},\ldots,t_{m}\right)$
satisfies \emph{the Extended Criterion} (EC) if it satisfies (EDiv)
and (EVar) where
\begin{lyxlist}{(EDiv)}
\item [{(EDiv)}] for every $k$ such that $1\leq k\leq m$, $\gcd\left(t_{1},t_{m}\right)$
divides $t_{k}$; and
\item [{(EVar)}] for every variable $x$, $\deg_{x}\gcd\left(t_{1},t_{m}\right)=0$
or $\left\{ \deg_{x}t_{k}\right\} _{k=1}^{m}$ is a monotonic sequence.\closer
\end{lyxlist}
\end{dfn}

Observe that $\left(t_1,t_2,\ldots,t_m\right)$ satisfies the Extended Criterion
if and only if its reversal $\left(t_m,t_{m-1},\ldots,t_1\right)$ does.
Hence (EVar) tests for ``monotonic'' without reference to a direction.

\begin{example}
\label{exa: EFC def}The list $T_{1}=\left(x_{0}x_{1},x_{0}x_{2},\ldots,x_{0}x_{m}\right)$
satisfies (EC). Why? (EDiv) is satisfied because $x_{0}$ divides
$t_{k}$ for $k=1,\ldots,m$, and (EVar) is satisfied because $\left\{ \deg_{x_{0}}t_{k}\right\} _{k=1}^{m}=\left(1,1,\ldots,1\right)$
and $\deg_{x_{i}}\gcd\left(t_{1},t_{m}\right)=0$ for $i=1,\ldots,m$.
Observe that no pair or triplet of terms in $T$ satisfies either
of Buchberger's Criteria.

Similarly, the list $T_{2}=\left(x_{0}x_{1},x_{0}^{2}x_{2},x_{0}^{2}x_{3},x_{0}^{3}x_{4}\right)$
satisfies (EC) without satisfying Buchberger's Criteria, as illustrated
by Figure~\ref{fig: Diagram of EFC good}: $\gcd\left(t_{1},t_{4}\right)=x_{0}$
divides both $t_{2}$ and $t_{3}$, and $\left\{ \deg_{x_{0}}t_{k}\right\} _{k=1}^{4}=\left(1,2,2,3\right)$
is monotonic.%
\begin{figure}
\begin{centering}
    \includegraphics[scale=0.4]{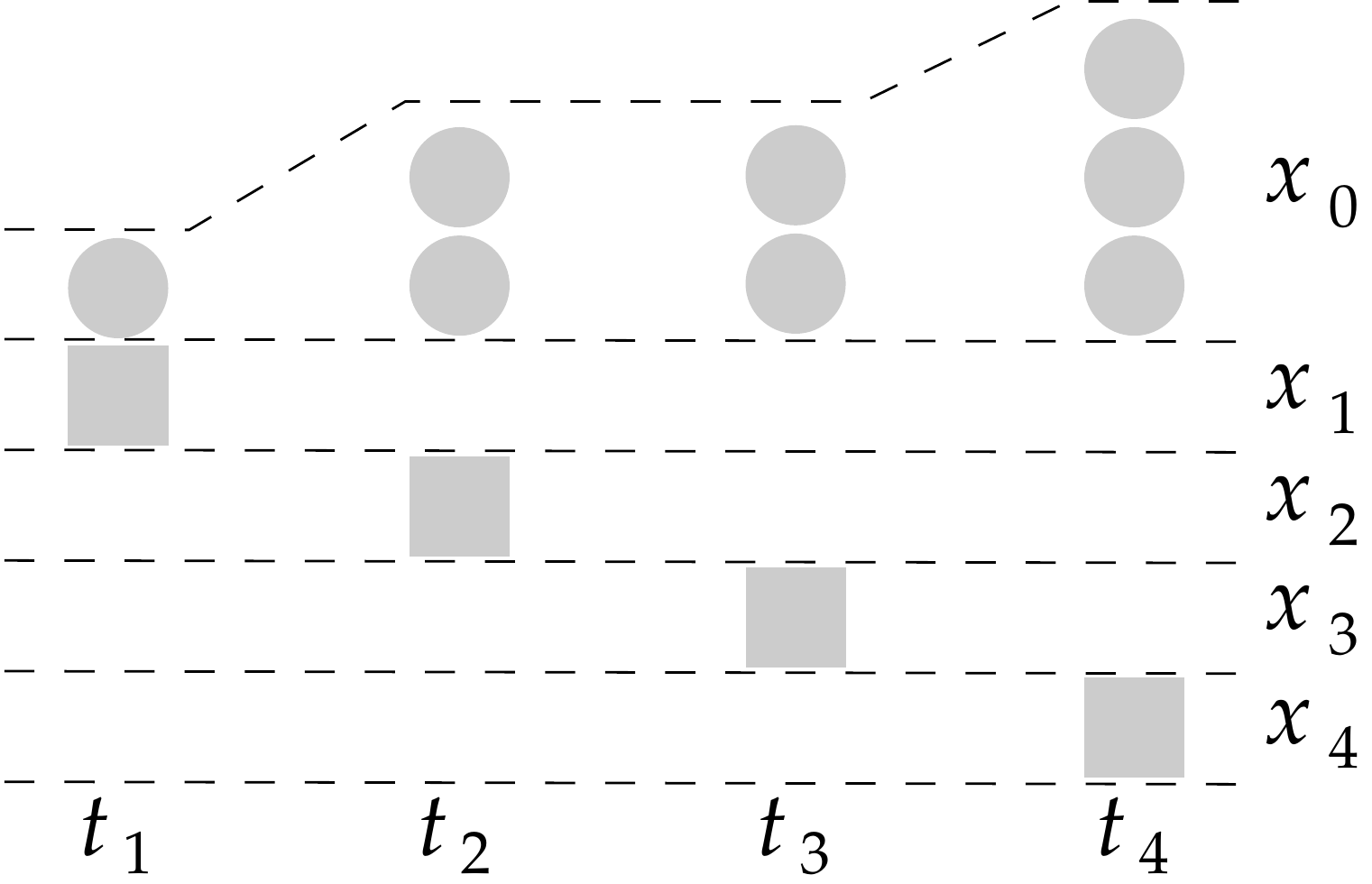}
\par\end{centering}
\caption{\label{fig: Diagram of EFC good}A list of terms that does not satisfy
Buchberger's Criteria, but satisfies the Extended Criterion. Observe
that $\gcd\left(t_{1},t_{4}\right)$ divides $t_{2}$ and $t_{3}$,
and $\left\{ \deg_{x_{0}}t_{i}\right\} _{i=1}^{4}$ is monotonic.
}
\par\rule{\linewidth}{0.1pt}\par
\end{figure}

On the other hand, the list $T_{3}=\left(x_{0}x_{1},x_{0}^{2}x_{2},x_{0}^{3}x_{3},x_{0}^{2}x_{4}\right)$
does not satisfy (EC), because (EVar) is violated: $\left\{ \deg_{x_{0}}t_{k}\right\} _{k=1}^{m}=\left(1,2,3,2\right)$
is not monotonic. This is illustrated by Figure~\ref{fig: Diagram of EFC bad}.%
\begin{figure}
\begin{centering}
\includegraphics[scale=0.4]{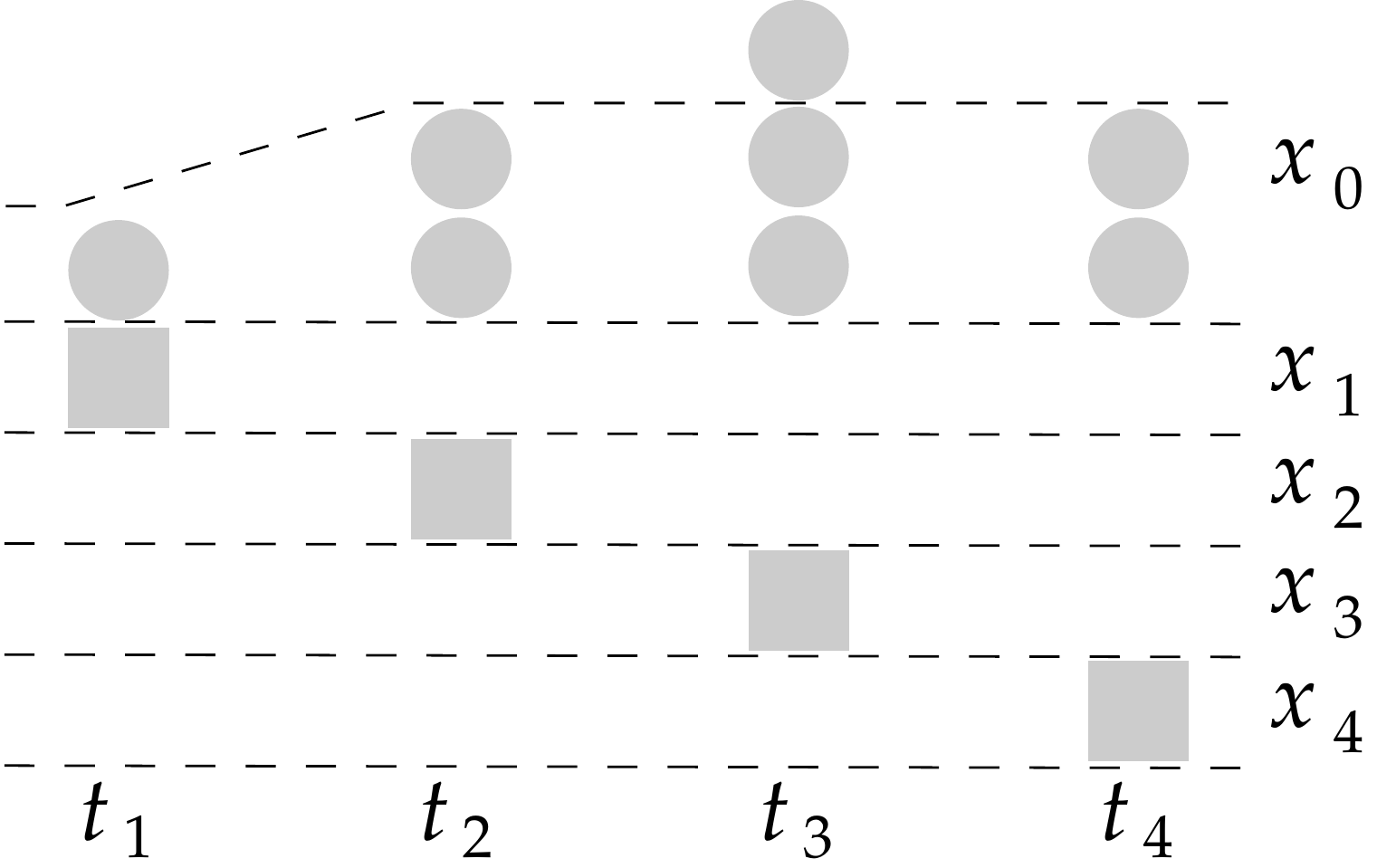}
\par\end{centering}
\caption{\label{fig: Diagram of EFC bad}A list of terms that satisfy neither
Buchberger's Criteria nor the Extended Criterion. Observe that although
$\gcd\left(t_{1},t_{4}\right)$ divides $t_{2}$ and $t_{3}$, $\left\{ \deg_{x_{0}}t_{i}\right\} _{i=1}^{4}$
is not monotonic.}
\par\rule{\linewidth}{0.1pt}\par
\end{figure}
 A permutation of $T_{3}$, $\left(x_{0}x_{1},x_{0}^{2}x_{2},x_{0}^{2}x_{4},x_{0}^{3}x_{3}\right)$,
would satisfy (EC), but such permutations are not always possible
if $t_{1}$ and $t_{m}$ share more than one variable;
consider $\left(x_{1}yz,x_{2}y^{2}z,x_{3}yz^{2},x_{4}y^{3}z^{2},x_{5}yz\right)$.\closer
\end{example}
We can use the Extended Criterion to generalize Buchberger's Characterization
Theorem.

\begin{maintheorem}Let $G\in\polyring^{m}$. The following are equivalent.

\begin{lyxlist}{(B)}
\item [{(A)}] $G$ is a Gr\"obner basis with respect to $\ordering$.
\item [{(B)}] For every $i,j$ such that $1\leq i<j\leq m$, one of the
following holds:

\begin{lyxlist}{(B3)}
\item [{(B0)}] $\SPoly{g_{i},g_{j}}$ reduces to zero with respect to $G$.
\item [{(B1)}] $\lt{g_{i}}$ and $\lt{g_{j}}$ are relatively prime.
\item [{(B2)}] There exist $k_{1},\ldots,k_{n}$ such that $i=k_{1}$,
$j=k_{n}$,\\
each of the $\lt{g_{k_{\ell}}}$ divides $\lcm\left(\lt{g_{i}},\lt{g_{j}}\right)$,
and\\
each $\SPoly{g_{k_{\ell}},g_{k_{\ell+1}}}$ reduces to zero with respect
to $G$.
\item [{(B3)}] There exist $k_{1},\ldots,k_{n}$ such that $i=k_{1}$,
$j=k_{n}$,\\
the list of leading terms of $g_{k_{1}},\ldots,g_{k_{n}}$ satisfy
EC, and\\
each $\SPoly{g_{k_{\ell}},g_{k_{\ell+1}}}$ reduces to zero with respect
to $G'=\left(g_{k_{1}},\ldots,g_{k_{n}}\right)$.\closer
\end{lyxlist}
\end{lyxlist}
\end{maintheorem}

It is essential that in (B3), the reductions to zero are with respect
to $G'$ and not to $G$. If we use $G$ instead of $G'$, then we
may not have a Gr\"obner basis; see Example~\ref{exa: bad application}.
This also makes it a bad idea to try to combine (B3) and (B2) into
one disjunction.

If the terms $t_{1}$ and $t_{m}$ are relatively prime, then $\left(t_{1},\ldots,t_{m}\right)$
satisfies (EDiv) and (EVar) easily. Hence, pairs of leading terms
that satisfy Buchberger's gcd Criterion also satisfy the Extended
Criterion. However, it is not easy to condense (B1) and (B3) into
one criterion, because (B3) requires that a chain of $S$-poly\-nomials
reduce to zero, while (B1) does not.

When $m=3$, EC is equivalent to the criterion of \cite{HP05}, which
generalizes \emph{both} of Buchberger's Criteria. For $m>3$, this
is not the case! Terms can satisfy Buchberger's lcm Criterion without
satisfying EC, and as in Example~\ref{exa: EFC def}, terms can satisfy
EC without satisfying Buchberger's lcm Criterion.

The remainder of this section consists of examples:

\begin{itemize}
\item Example~\ref{exa: easy example} provides a straightforward application
of the Main Theorem;
\item Example~\ref{exa: bad application} shows an invalid application
of the Main Theorem.
\end{itemize}
\begin{example}
\label{exa: easy example}Let $G=\left(g_{1},g_{2},g_{3},g_{4}\right)$
where\begin{align*}
g_{1} & =4x_{0}x_{1}+2x_{0}x_{2}+3x_{0}x_{4}-8x_{1}-4x_{2}-6x_{4}\\
g_{2} & =3x_{0}^{2}x_{2}+2x_{0}^{2}x_{4}-6x_{0}x_{2}-4x_{0}x_{4}\\
g_{3} & =4x_{0}^{2}x_{3}+2x_{0}^{2}x_{4}-8x_{0}x_{3}-4x_{0}x_{4}\\
g_{4} & =2x_{0}^{3}x_{4}-2x_{0}^{2}x_{3}-x_{0}^{2}x_{4}+4x_{0}x_{3}-6x_{0}x_{4}.\end{align*}
%Let $\ordering$ represent the graded reverse lexicographic ordering \cite{CLO97}
%with $x_0\ordgt x_1\ordgt\cdots\ordgt x_4$,
%so that $\lt{g_1}=x_0x_1$, $\lt{g_2}=x_0^2x_2$, $\lt{g_3}=x_0^2x_3$,
%and $\lt{g_4}=x_0^3x_4$.
Let $\ordering$ represent any term ordering such that
$\lt{g_1}=x_0x_1$, $\lt{g_2}=x_0^2x_2$, $\lt{g_3}=x_0^2x_3$,
and $\lt{g_4}=x_0^3x_4$.
We pose this question: \emph{Is $G$ a Gr\"obner basis with respect to $\ordlt$?}

Routine computation verifies that the pairs $\left(1,2\right)$, $\left(2,3\right)$,
and $\left(3,4\right)$ satisfy (B0) of Theorem~\ref{thm:Buchberger's Criteria}
and of the Main Theorem;
that is, $\SPoly{g_{1},g_{2}}$, $\SPoly{g_{2},g_{3}}$, and $\SPoly{g_{3},g_{4}}$
reduce to zero with respect to $G$.
We can say something more: in the process of reducing them,
we discover that for $i=1,2,3$ each $\SPoly{g_i,g_{i+1}}$ reduces to zero
with respect to $\{g_i,g_{i+1}\}$. This will prove important in a moment.

As for the remaining pairs, they do not satisfy (B1) or (B2) of either theorem,
because no permutation of the leading terms $x_{0}x_{1}$, $x_{0}^{2}x_{2}$,
$x_{0}^{2}x_{3}$, and $x_{0}^{3}x_{4}$ satisfies Buchberger's criteria.
Thus, Theorem~\ref{thm:Buchberger's Criteria} does not help us answer
the question posed.

However, the Main Theorem does.
Observe that \[\left(\lt{g_{1}},\lt{g_{2}},\lt{g_{3}},\lt{g_{4}}\right)=T_{2}\]
where $T_2$ was defined in Example~\ref{exa: easy example}; the Extended Criterion applies to $T_2$.
In addition, $\SPoly{g_{1},g_{2}}$, $\SPoly{g_{2},g_{3}}$, and
$\SPoly{g_{3},g_{4}}$ reduce to zero with respect to $G$.
Hence $(1,4)$ satisfies (B3) of the Main Theorem with $G'=G$.

We are not quite done: to decide whether $G$ is a Gr\"obner basis,
we must resolve the pairs $(1,3)$ and $(2,4)$.
The Main Theorem shows that these pairs also satisfy (B0).
\begin{itemize}
\item To show that $\SPoly{g_1,g_3}$ reduces to zero, we claim that $\{g_1,g_2,g_3\}$ is a Gr\"obner basis:
\begin{itemize}
\item We know that the pairs $(1,2)$ and $(2,3)$ satisfy (B0) of the Main Theorem.
\item The Extended Criterion applies to $\left(\lt{g_{1}},\lt{g_{2}},\lt{g_{3}}\right)$.
\item Recalling that each $\SPoly{g_i,g_{i+1}}$ reduces to zero w.r.t.~$\{g_i,g_{i+1}\}$,
we infer that $\SPoly{g_1,g_2}$ and $\SPoly{g_2,g_3}$ reduce to zero w.r.t.~$G^{(1,2,3)}=\left(g_1,g_2,g_3\right)$.
Thus the pair $(1,3)$ satisfies (B3) of the Main Theorem.
\item This implies that $G^{(1,2,3)}$ is a Gr\"obner basis, so $\SPoly{g_1,g_3}$ reduces to zero.
\end{itemize}
\item To show that $\SPoly{g_2,g_4}$ reduces to zero, we claim that $\{g_2,g_3,g_4\}$ is a Gr\"obner basis:
\begin{itemize}
\item We know that the pairs $(2,3)$ and $(3,4)$ satisfy (B0) of the Main Theorem.
\item The Extended Criterion applies to $\left(\lt{g_{2}},\lt{g_{3}},\lt{g_{4}}\right)$.
\item Recalling that each $\SPoly{g_i,g_{i+1}}$ reduces to zero w.r.t.~$\{g_i,g_{i+1}\}$,
we infer that $\SPoly{g_2,g_3}$ and $\SPoly{g_3,g_4}$ reduce to zero w.r.t.~$G^{(2,3,4)}=\left(g_2,g_3,g_4\right)$.
Thus the pair $(2,4)$ satisfies (B3) of the Main Theorem.
\item This implies that $G^{(2,3,4)}$ is a Gr\"obner basis, so $\SPoly{g_2,g_4}$ reduces to zero.
\end{itemize}
\end{itemize}
\noindent Recall that $(1,4)$ satisfies (B3) of the Main Theorem with $G'=G$.
We now know that the other pairs satisfy (B0).
It follows from the Main Theorem that $G$ is indeed a Gr\"obner basis with respect
to $\ordlt$. We have answered the question posed by reducing
only three of the six $S$-poly\-nomials to zero.

To achieve this, we had to know not only that the $S$-poly\-nomials
reduced to zero, but also over which subsets of $G$ they were reduced!
Had those subsets been different, the Extended Criterion probably would not apply,
as Example~\ref{exa: bad application} shows below.
Conversely, it is conceivable that one could apply the Extended Criterion
but not realize it, because one has verified that the $S$-poly\-nomials in question
reduce to zero with respect to a different subset of $G$ than the one needed.\closer
\end{example}
The following example illustrates why (B3) of the Main Theorem requires
$G'$ and not $G$.

\begin{example}
\label{exa: bad application}Let $G=\left(g_{1},g_{2},g_{3},g_{4}\right)$
where\begin{align*}
g_{1} & =x^{2}y+z\\
g_{2} & =xyz\\
g_{3} & =xy^{2}\\
g_{4} & =z^{2}.\end{align*}
Let $\ordering$ be any ordering such that $x^{2}y\ordgt z$. Again
we ask, \emph{Is $G$ a Gr\"obner basis with respect to $\ordlt$?}

It is easy to verify that pairs $\left(1,2\right)$, $\left(1,4\right)$,
$\left(2,3\right)$, $\left(2,4\right)$, and $\left(3,4\right)$
satisfy (B0) of the Main Theorem. The leading terms of $g_{1}$, $g_{2}$,
and $g_{3}$ satisfy the Extended Criterion, so set $G'=\left(g_{1},g_{2},g_{3}\right)$.
A subquestion: Does (B3) of the Main Theorem imply that $G$ is a
Gr\"obner basis? No, because the $S$-poly\-nomials $\SPoly{g_{1},g_{2}}$
and $\SPoly{g_{2},g_{3}}$ reduce to zero with respect to $G$, but
not with respect to $G'$. In fact, $\SPoly{g_{1},g_{3}}=yz$ \emph{does
not} reduce to zero with respect to $G$ even though all the other
$S$-poly\-nomials do! Thus $G$ is not a Gr\"obner basis with respect
to $\ordlt$.\closer
\end{example}

\section{\label{sec:Proof}Proof of the Main Theorem}

Before diving into details, we pause a moment to describe the fundamental
goal of the proof. A previous example will serve us well. The polynomials
of Example~\ref{exa: easy example} factor as follows:\begin{align*}
g_{1} & =\phantom{2x_{0}}\left(x_{0}-2\right)\left(4x_{1}+2x_{2}+3x_{4}\right)\\
g_{2} & =\phantom{2}x_{0}\left(x_{0}-2\right)\left(3x_{2}+2x_{4}\right)\\
g_{3} & =2x_{0}\left(x_{0}-2\right)\left(2x_{3}+x_{4}\right)\\
g_{4} & =\phantom{2}x_{0}\left(x_{0}-2\right)\left(2x_{0}x_{4}+3x_{4}-2x_{3}\right).\end{align*}
Any pair of the polynomials has a common divisor whose cofactors have
relatively prime leading terms: for example, the common divisor of
$g_{1}$ and $g_{4}$ is $x_{0}-2$, and the leading terms of the
cofactors are $x_{1}$ and $x_{0}^{2}x_{4}$, respectively. From (B1)
of Theorem~\ref{thm:Buchberger's Criteria}, we know that the system
of cofactors of the gcd is a Gr\"obner basis. Generating a new system
whose polynomials are multiples of the cofactors does not alter this,
\emph{provided that} for each pair the multiple of the cofactors is
common.

The fundamental goal of the proof is to generalize this observation.
Theorem~\ref{thm: completion of sufficiency} accomplishes this.
Lemma~\ref{lem: Chain implies certain lts} is a technical lemma
that fills in a crucial step of Lemma~\ref{lem: Chain implies gcd commutes},
which in its turn is a technical lemma that fills in a crucial step
of Theorem~\ref{thm: completion of sufficiency}.
Lemmas~\ref{lem:remove some variables from system} and~\ref{lem:matrix nonsingular}
are also technical lemmas that help clarify
some linear algebra  necessary for the proof of Lemma~\ref{lem: Chain implies certain lts}.

Although Lemmas~\ref{lem: Chain implies gcd commutes} and~\ref{thm: completion of sufficiency}
generalize similar lemmas in \cite{HP05}, the increased size of the
list ($m>3$) required the development of the entirely new Lemma~\ref{lem: Chain implies certain lts},
as well as substantial changes to the proof of Lemma~\ref{lem: Chain implies gcd commutes}.
In addition, Theorem~\ref{thm: completion of sufficiency} leads
to the important consequence Corollary~\ref{cor:EFC implies reprime lts};
this consequence went unremarked in the previous work, but will show
itself useful in Section~\ref{sec: class of systems}.

Besides a proof of the main theorem, this section develops several
results that are interesting or useful in other contexts. Lemma~\ref{lem: Chain implies certain lts},
for example, took us completely by surprise.
Lemma \ref{lem: Chain implies gcd commutes} generalizes a relationship between
the gcd of two polynomials and their $S$-poly\-nomial. 
Theorem~\ref{thm: completion of sufficiency}
is similar to a well-known theorem regarding Buchberger's lcm Criterion;
it will prove useful in Section~\ref{sec: class of systems}, whereas
the Main Theorem does not.

We turn to the proof. We regularly make implicit use of Proposition~\ref{pro: properties of lts}
below. The proof is easy and well-known, so we do not repeat it here.

\begin{prop}
\label{pro: properties of lts}For all $f,g\in\polyring$ each of the following holds.

(A) If $f+g\neq0$, then $\lt{f+g}\ordle\max_{\ordlt}\left(\lt{f},\lt{g}\right)$.

(B) $\lt{f\cdot g}=\lt{f}\cdot\lt{g}$.

(C) If $f/g$ is a polynomial, then $\lt{f/g}=\lt{f}/\lt{g}$.\closer
\end{prop}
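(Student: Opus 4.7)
The plan is to prove the three parts in order, with (A) bootstrapping (B) and (B) trivializing (C). For (A), I would argue directly from definitions: every term appearing with nonzero coefficient in $f+g$ is either a term of $f$, a term of $g$, or a nonzero linear combination of matching terms one from each, so every such term is $\ordle\lt{f}$ or $\ordle\lt{g}$, hence $\ordle\max_{\ordlt}\left(\lt{f},\lt{g}\right)$. Since $\lt{f+g}$ is by definition one of these surviving terms, the bound follows.

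For (B), I would decompose $f=\lc{f}\lt{f}+f'$ and $g=\lc{g}\lt{g}+g'$ with $\lt{f'}\ordlt\lt{f}$ and $\lt{g'}\ordlt\lt{g}$, then expand
\[
fg=\lc{f}\lc{g}\,\lt{f}\lt{g}+\bigl(\lc{f}\lt{f}\cdot g'+f'\cdot\lc{g}\lt{g}+f'g'\bigr).
\]
Admissibility of $\ordlt$ (that $u\ordlt v$ implies $wu\ordlt wv$ for any term $w$) makes every term of each of the three summands in the parentheses strictly $\ordlt\lt{f}\lt{g}$; applying (A) twice shows that the tail's leading term is strictly $\ordlt\lt{f}\lt{g}$. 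Since coefficients lie in a field, $\lc{f}\lc{g}\neq 0$, so the monomial $\lc{f}\lc{g}\,\lt{f}\lt{g}$ cannot be absorbed into the tail and must equal $\lt{fg}$. Part (C) follows immediately: writing $h=f/g\in\polyring$ so that $f=gh$, part (B) gives $\lt{f}=\lt{g}\lt{h}$, and dividing by $\lt{g}$ is legitimate because $h$ is a polynomial.

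I do not anticipate a real obstacle here; the main points of care are in (B), where admissibility of $\ordlt$ and the integral-domain property of the coefficient field are both essential but easily overlooked. Beyond that the proof is a straightforward unpacking of the definitions of $\lt{\cdot}$ and $\lc{\cdot}$.
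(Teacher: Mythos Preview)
Your argument is correct in all three parts; the only minor quibble is that in (A) you should note $f$ and $g$ are implicitly nonzero (else $\lt{\cdot}$ is undefined), and in (B) the case $f'=0$ or $g'=0$ should be handled or at least mentioned. The paper itself does not prove this proposition at all: it states that ``the proof is easy and well-known, so we do not repeat it here,'' so there is no approach to compare against---your write-up simply supplies what the paper omits.
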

At this point we introduce the concept of an $S$-re\-pre\-sen\-ta\-tion,
which is essential to the proof.

\begin{dfn}
\label{def: valid SPoly rep}Let $p\in\polyring$, $t$ a term of
$\polyring$, and $G\in\polyring^{m}$. We say that $\mathbf{h}\in\polyring^{m}$
\emph{is a }$t$\emph{-re\-pre\-sen\-ta\-tion of}\textbf{\emph{
$p$ }}\emph{with respect to} $G$ if $p=h_{1}g_{1}+\cdots+h_{m}g_{m}$
and for all $i$ such that $1\leq i\leq m$, we have $h_{i}=0$ or
$\lt{h_{i}g_{i}}\ordle t$.

Furthermore, let $g_{i},g_{j}\in G$. If $t\ordlt\lcm\left(\lt{g_{i}},\lt{g_{j}}\right)$
and $\mathbf{h}$ is a $t$-re\-pre\-sen\-ta\-tion of $\SPoly{g_{i},g_{j}}$
with respect to $G$, then we say that $\SPoly{g_{i},g_{j}}$ \emph{has}\textbf{\emph{
}}\emph{an $S$-re\-pre\-sen\-ta\-tion with respect to} $G$,
and that $\mathbf{h}$ \emph{is an $S$-re\-pre\-sen\-ta\-tion
of} $\SPoly{g_{i},g_{j}}$\emph{ with respect to} $G$. We may omit
``with respect to $G$'' if it is clear from the context.\closer
\end{dfn}
The notion of $S$-representation is related, but not equivalent,
to the notion of reduction to zero. We discuss this relationship
near the end of the section, where it becomes important for the Main
Theorem. For the time being, we content ourselves with exploring how
the Extended Criterion can link a chain of $S$-re\-pre\-sen\-ta\-tions.

To do that, we will need Lemma~\ref{lem: Chain implies certain lts},
which identifies a useful and interesting structure in a certain chain
of $S$-re\-pre\-sen\-ta\-tions.

\begin{lem}
\label{lem: Chain implies certain lts}Let $G\in\polyring^{m}$. Then
(A) $\Longrightarrow$ (B) where
\begin{lyxlist}{(B)}
\item [{(A)}] $\SPoly{g_{1},g_{2}}$, $\SPoly{g_{2},g_{3}}$, \ldots{},
and $\SPoly{g_{m-1},g_{m}}$ all have $S$-re\-pre\-sen\-ta\-tions
with respect to $G$.
\item [{(B)}] There exist $P,Q\in\polyring$ such that $P\cdot g_{1}=Q\cdot g_{m}$ and

\begin{lyxlist}{(B)}
\item [{~}] $\lt{P}=\sigma_{g_{1},g_{2}}\sigma_{g_{2},g_{3}}\cdots\sigma_{g_{m-1},g_{m}}$,
and
\item [{~}] $\lt{Q}=\sigma_{g_{2},g_{1}}\sigma_{g_{3},g_{2}}\cdots\sigma_{g_{m},g_{m-1}}$.\closer
\end{lyxlist}
\end{lyxlist}
\end{lem}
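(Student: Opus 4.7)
The plan is to interpret each $S$-re\-pre\-sen\-ta\-tion as a syzygy among $g_1,\ldots,g_m$, and then combine these syzygies, via the classical ``signed minors in the kernel'' identity (Jacobi's theorem) for a rectangular matrix, into a single syzygy involving only $g_1$ and $g_m$. Write $c_j=\lc{g_j}$, $\sigma_i=\sigma_{g_i,g_{i+1}}$, and $\sigma_i'=\sigma_{g_{i+1},g_i}$. The $i$-th $S$-re\-pre\-sen\-ta\-tion rearranges to
\[
\sum_{j=1}^{m}A_j^{(i)}g_j=0,
\]
with $A_i^{(i)}=c_{i+1}\sigma_i-h_i^{(i)}$, $A_{i+1}^{(i)}=-c_i\sigma_i'-h_{i+1}^{(i)}$, and $A_j^{(i)}=-h_j^{(i)}$ for $j\notin\{i,i+1\}$. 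Every perturbation satisfies $\lt{h_j^{(i)}g_j}\ordlt\sigma_i\lt{g_i}$ by the definition of $S$-re\-pre\-sen\-ta\-tion.

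Form the $(m-2)\times(m-1)$ matrix $M$ whose $(j,i)$-entry is $A_j^{(i)}$, with $j$ ranging over $2,\ldots,m-1$. Let $M_{\hat\imath}$ denote the square submatrix obtained by deleting the $i$-th column. The vector $\vec\lambda$ defined by $\lambda_i=(-1)^i\det(M_{\hat\imath})$ is then annihilated by $M$. Setting $P:=\sum_i\lambda_iA_1^{(i)}$ and $Q:=-\sum_i\lambda_iA_m^{(i)}$, the identity $\sum_i\lambda_i\sum_j A_j^{(i)}g_j=0$ collapses to $Pg_1=Qg_m$, because the contributions from $g_2,\ldots,g_{m-1}$ vanish by construction.

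The main obstacle is controlling $\lt{\lambda_i}$, which is where the two auxiliary lemmas promised earlier in the section enter. Let $D$ denote the ``dominant part'' of $M$, namely the result of setting every $h_j^{(i)}$ to zero. Then $D$ is bidiagonal, and $D_{\hat\imath}$ decomposes as the direct sum of an upper-triangular and a lower-triangular block; by inspection,
\[
\det(D_{\hat\imath})\;=\;\pm\Bigl(\prod_{j<i}c_j\sigma_j'\Bigr)\Bigl(\prod_{j>i}c_{j+1}\sigma_j\Bigr).
\]
Lemma~\ref{lem:remove some variables from system} should be used to strip away variables that do not appear in the $\sigma$'s, so the comparison of determinant summands reduces to a purely multiplicative term computation; Lemma~\ref{lem:matrix nonsingular} should then confirm that the leading term of the dominant summand is not cancelled by the remaining summands of $\det(M_{\hat\imath})$, each of which involves at least one factor $h_j^{(i)}$ and is therefore strictly smaller in the term order.

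Finally, I would verify $\lt P=\sigma_1\cdots\sigma_{m-1}$ and $\lt Q=\sigma_1'\cdots\sigma_{m-1}'$ by propagating the telescoping identity $\sigma_i\lt{g_i}=\sigma_i'\lt{g_{i+1}}$ along the chain. Only $\lambda_1A_1^{(1)}$ contributes the claimed leading term to $P$: its dominant part is $\lt{\lambda_1}\cdot c_2\sigma_1=c_2\sigma_1\sigma_2\cdots\sigma_{m-1}$. For $k\ge 2$ we have $A_1^{(k)}=-h_1^{(k)}$, and the telescoping shows $\lt{\lambda_k}\cdot\sigma_k\lt{g_k}=\sigma_1\cdots\sigma_{m-1}\lt{g_1}$, so the strict inequality $\lt{h_1^{(k)}}\lt{g_1}\ordlt\sigma_k\lt{g_k}$ forces $\lt{\lambda_kh_1^{(k)}}\ordlt\sigma_1\cdots\sigma_{m-1}$. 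A symmetric computation at the opposite end of the chain yields the claimed $\lt Q$, and a scalar normalisation produces $P$ and $Q$ with leading terms exactly as stated.
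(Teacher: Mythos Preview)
Your approach is essentially the paper's: encode the $m-1$ $S$-representations as a homogeneous linear system in $g_1,\ldots,g_m$, eliminate $g_2,\ldots,g_{m-1}$ to obtain $Pg_1=Qg_m$ with $P$ and $Q$ expressed as $(m-1)\times(m-1)$ determinants, and then verify the leading terms by showing that the main-diagonal elementary product dominates all others. The paper carries out the elimination by Gaussian reduction (this is the actual content of Lemma~\ref{lem:remove some variables from system}) and then applies the Lemma~\ref{lem:matrix nonsingular} argument once, directly to the full determinants $P$ and $Q$; you instead invoke the signed-minor kernel identity and analyse each cofactor $\lambda_i=\pm\det(M_{\hat\imath})$ separately, combining them via the telescoping relation $\sigma_j\lt{g_j}=\sigma_j'\lt{g_{j+1}}$. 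Both routes yield the same determinants (your $P$ is, up to sign, the Laplace expansion of the paper's $P$ along its first column) and rest on the same elementary-product comparison.

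One correction: Lemma~\ref{lem:remove some variables from system} does not ``strip away variables that do not appear in the $\sigma$'s''; it is purely the Gaussian-elimination bookkeeping that shows the eliminated system has determinantal coefficients, and your cofactor argument bypasses it entirely. What you actually need is the argument of Lemma~\ref{lem:matrix nonsingular} extended from the specific matrices $A_k$ stated there to every $M_{\hat\imath}$. That extension is routine---the dominant part of each $M_{\hat\imath}$ is block-triangular, and every non-diagonal elementary product picks up at least one factor $h_j^{(i)}$---but it is slightly more work than the paper's route, which applies the comparison once to $P$ and once to $Q$ rather than to all $m-1$ cofactors.
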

The proof of Lemma \ref{lem: Chain implies certain lts} requires some non-trivial linear algebra,
so we defer it to page~\pageref{pf:Proof of lem matrix nonsingular}.
Lemmas~\ref{lem:remove some variables from system} and~\ref{lem:matrix nonsingular}
provide the necessary results.
Lemma~\ref{lem:remove some variables from system} describes a relationship between
the elimination of variables in a linear system and the coefficients of those variables.

\begin{lem}\label{lem:remove some variables from system}
Let $n\in\mathbb{N^+}$. Consider the system of $n-1$ linear equations in $n$ variables\[
    \mathcal{S}_1 = \left\{\sum_{j=1}^na_{i,j}x_j\right\}_{i=1}^{n-1}.
\]
For $k=1,\ldots,n-2$ define the matrix\[
    A_k = \left(\begin{array}{cccc}
            a_{1,1} & a_{1,2} & \cdots & a_{1,k} \\
            a_{2,1} & a_{2,2} & \cdots & a_{2,k} \\
            \vdots & & \ddots & \vdots \\
            a_{k,1} & a_{k,2} & \cdots & a_{k,k}
        \end{array}\right).
\]
If each $A_k$ has nonzero determinant,
then for each $k=2,\ldots,n-1$ the system\[
    \mathcal{S}_k = \left\{\sum_{j=i}^n b_{i,j}^{(k)} x_j =0\right\}_{i=k}^{n-1}
\]
with\[
    b_{i,j}^{(k)} = \left|\begin{array}{cc}
            A_{k-1} & \begin{array}{c}
                    a_{1,j} \\ a_{2,j} \\ \vdots \\ a_{k-1,j}
                \end{array} \\
            \begin{array}{cccc}
                a_{i,1} & a_{i,2} & \cdots & a_{i,k-1}
            \end{array} & a_{i,j}
        \end{array}\right|
\] is consistent.\closer
\end{lem}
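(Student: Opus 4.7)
The plan is to show that $\mathcal{S}_k$ is a consequence of $\mathcal{S}_1$: every solution of the latter is automatically a solution of each equation in $\mathcal{S}_k$. (This is how I read ``consistent'' in the statement, since $\mathcal{S}_k$ is homogeneous and hence trivially possesses the zero solution on its own.) Somewhat surprisingly, the nonvanishing hypothesis on the principal minors $\det A_k$ plays no role in my argument; I expect it to be invoked in the companion lemma or in later uses.

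The key observation is that $b_{i,j}^{(k)}$ is the determinant of a $k\times k$ matrix $M_j$ whose dependence on $j$ is confined to its rightmost column, namely $(a_{1,j},\ldots,a_{k-1,j},a_{i,j})^{T}$; the first $k-1$ columns are independent of $j$. By multilinearity of the determinant in that column,
\[
\sum_{j=k}^{n} b_{i,j}^{(k)}\, x_{j} \;=\; \det M^{\ast},
\]
where $M^{\ast}$ has the same first $k-1$ columns as $M_j$ and rightmost column $(W_1,\ldots,W_{k-1},W_i)^{T}$ with $W_\ell := \sum_{j=k}^{n} a_{\ell,j}\, x_j$.

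Now suppose $(x_1,\ldots,x_n)$ satisfies $\mathcal{S}_1$. Then for each $\ell\in\{1,\ldots,k-1,i\}$, the $\ell$-th equation of $\mathcal{S}_1$ rewrites as $W_\ell = -\sum_{j=1}^{k-1} a_{\ell,j}\, x_j$. Substituting into the last column of $M^{\ast}$ expresses it as $-\sum_{j=1}^{k-1} x_j\, \mathbf{c}_j$, where $\mathbf{c}_j$ denotes the $j$-th column of $M^{\ast}$. Since the last column is then a linear combination of the first $k-1$ columns, $\det M^{\ast} = 0$, yielding $\sum_{j=k}^{n} b_{i,j}^{(k)}\, x_j = 0$, as required.

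The main obstacle I anticipate is spotting the multilinearity trick at the outset. Once one notices that $b_{i,j}^{(k)}$ depends on $j$ only through a single column, the sum $\sum_{j} b_{i,j}^{(k)} x_j$ collapses to a single determinant, and the rest is the familiar ``linearly dependent columns'' argument combined with $\mathcal{S}_1$.
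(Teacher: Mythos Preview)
Your argument is correct and takes a genuinely different route from the paper's. The paper proceeds by induction on $k$: for $k=2$ it eliminates $x_1$ by the obvious cross-multiplication, and for the inductive step it eliminates $x_{k-1}$ from $\mathcal{S}_{k-1}$ and then invokes Jacobi's determinantal identity (Dodgson condensation) to show that the resulting coefficients factor as $b_{i,j}^{(k)}\cdot\det A_{k-2}$; the nonvanishing hypothesis on $\det A_{k-2}$ is used precisely to cancel that extraneous factor. Your approach bypasses all of this: by recognising that $b_{i,j}^{(k)}$ depends on $j$ only through the last column of a fixed $k\times k$ matrix, multilinearity collapses $\sum_j b_{i,j}^{(k)}x_j$ into a single determinant, and for any solution of $\mathcal{S}_1$ that last column lies in the span of the first $k-1$ columns. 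You are also right that the hypothesis $\det A_k\neq 0$ is unnecessary for this direction; the paper needs it only because its elimination procedure manufactures the factor $\det A_{k-2}$. What the paper's version buys is a constructive derivation of $\mathcal{S}_k$ from $\mathcal{S}_1$ by explicit row operations, which mirrors how the lemma is actually deployed in the proof of Lemma~\ref{lem: Chain implies certain lts}; your argument verifies the implication $\mathcal{S}_1\Rightarrow\mathcal{S}_k$ directly without retracing the elimination. (A minor point: the statement writes the inner sum as $\sum_{j=i}^n$, but the paper's own base case runs $j$ from $k$ to $n$, matching your reading; this appears to be a typo in the statement.)
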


To prove Lemma~\ref{lem:remove some variables from system},
we use the following special case of Jacobi's Theorem on determinants,
whose proof we do not reproduce here \cite{Jacobi1833,DodgsonsMethod}.

\begin{thm}\label{thm: Jacobi's Theorem}
Let $A$ be an $n\times n$ matrix, $M$ a $2\times 2$ minor of $A$,
$M'$ the corresponding $2\times 2$ minor of the adjugate of $A$,
and $M^*$ the $\left(n-2\right)\times\left(n-2\right)$ minor of $A$
that is complementary to $M$.
Then\[\det M' = \det A\cdot\det M^*.\closer\]
\end{thm}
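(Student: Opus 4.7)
The plan is to induct on $k$, using Jacobi's Theorem (Theorem~\ref{thm: Jacobi's Theorem}) to bridge $\mathcal{S}_{k-1}$ and $\mathcal{S}_k$ via the classical Bareiss/Desnanot--Jacobi recurrence. I read ``$\mathcal{S}_k$ is consistent'' as: each equation of $\mathcal{S}_k$ is a linear consequence of the equations of $\mathcal{S}_1$, so the solution set of $\mathcal{S}_1$ embeds in that of $\mathcal{S}_k$.

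For the base case $k=2$, expansion gives $b_{i,j}^{(2)} = a_{1,1}a_{i,j} - a_{1,j}a_{i,1}$, the coefficient of $x_j$ in the combination $a_{1,1}\cdot(\text{$i$th equation of }\mathcal{S}_1)-a_{i,1}\cdot(\text{first equation of }\mathcal{S}_1)$. Because $\det A_1 = a_{1,1}\neq 0$, this is an invertible scaling-and-subtract, and the combination vanishes on every solution of $\mathcal{S}_1$. For $j=1$ the defining $2\times 2$ matrix has two equal columns, forcing $b_{i,1}^{(2)}=0$, so the sum may be taken to start at $j\geq 2$.

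For the inductive step $k-1\to k$, scale the $i$th equation of $\mathcal{S}_{k-1}$ by $b_{k-1,k-1}^{(k-1)}=\det A_{k-1}$, subtract $b_{i,k-1}^{(k-1)}$ times the $(k-1)$th equation, and divide by $\det A_{k-2}$ (nonzero by hypothesis). The resulting $(i,j)$-coefficient is
\[
    \frac{\det A_{k-1}\cdot b_{i,j}^{(k-1)} \;-\; b_{i,k-1}^{(k-1)}\cdot b_{k-1,j}^{(k-1)}}{\det A_{k-2}}.
\]
Jacobi's Theorem is then applied to the $k\times k$ matrix obtained by augmenting $A_{k-1}$ with the $i$th row and $j$th column of $A$: the $2\times 2$ minor of its adjugate indexed by rows $k-1,i$ and columns $k-1,j$ has the four $b_{*,*}^{(k-1)}$-entries above, and its determinant equals $\det A\cdot \det M^*$, where $M^*$ is the complementary $(k-2)\times(k-2)$ minor, equal to $A_{k-2}$. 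Since the relevant $\det A$ is precisely $b_{i,j}^{(k)}$, this yields the identity $\det A_{k-1}\cdot b_{i,j}^{(k-1)}-b_{i,k-1}^{(k-1)}\cdot b_{k-1,j}^{(k-1)} = \det A_{k-2}\cdot b_{i,j}^{(k)}$; dividing through recovers $b_{i,j}^{(k)}$. Thus $\mathcal{S}_k$ comes from $\mathcal{S}_{k-1}$ by a single invertible row operation and inherits its consistency. The same column-collision argument as in the base case gives $b_{i,j}^{(k)}=0$ for $j<k$, and iterating row-by-row lets the sum begin at $j=i$.

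The main obstacle is the precise invocation of Jacobi's Theorem in the inductive step: selecting the correct $2\times 2$ minor $M'$ of the adjugate, checking that its entries are signed copies of $b_{k-1,k-1}^{(k-1)}$, $b_{k-1,j}^{(k-1)}$, $b_{i,k-1}^{(k-1)}$, $b_{i,j}^{(k-1)}$, identifying the complementary minor $M^*$ with $A_{k-2}$, and tracking the signs the adjugate introduces. Once this bookkeeping is set up, the algebraic identity driving the induction follows from Theorem~\ref{thm: Jacobi's Theorem} by direct substitution, and no further computation is needed.
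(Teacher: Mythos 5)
Your proposal does not prove the statement it was asked to prove. The statement is Jacobi's Theorem itself: for an $n\times n$ matrix $A$, a $2\times 2$ minor $M'$ of the adjugate of $A$ equals $\det A$ times the complementary $(n-2)\times(n-2)$ minor $M^*$ of $A$. What you have written is instead an argument for Lemma~\ref{lem:remove some variables from system} (the consistency of the eliminated systems $\mathcal{S}_k$), and its key inductive step explicitly \emph{invokes} Jacobi's Theorem as a black box (``Jacobi's Theorem is then applied to the $k\times k$ matrix obtained by augmenting $A_{k-1}$\ldots''). As a proof of the assigned statement this is circular: at no point do you derive the identity $\det M' = \det A\cdot\det M^*$; you only use it to obtain the Desnanot--Jacobi/Bareiss recurrence $\det A_{k-1}\cdot b_{i,j}^{(k-1)}-b_{i,k-1}^{(k-1)}\cdot b_{k-1,j}^{(k-1)} = \det A_{k-2}\cdot b_{i,j}^{(k)}$. (For what it is worth, your argument is essentially the paper's proof of that elimination lemma, so it is not wasted work --- it is just aimed at the wrong target. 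Note also that the paper itself does not prove Jacobi's Theorem; it cites the classical literature.)

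A genuine proof of the stated special case has to come from the structure of the adjugate, not from the elimination process that depends on it. The standard route: use $A\cdot\operatorname{adj}(A)=(\det A)\,I$ and, for the two rows $r_1,r_2$ and columns $c_1,c_2$ indexing $M$, consider the matrix $B$ obtained from $\operatorname{adj}(A)$ by keeping only columns $r_1,r_2$ and replacing the others by the corresponding standard basis vectors; computing $\det(AB)$ two ways gives $\det A\cdot\det M' = (\det A)^{2}\cdot(\pm\det M^*)$, and after handling the sign convention and the degenerate case $\det A=0$ (by a density/continuity or generic-matrix argument over the polynomial ring in the entries), one obtains $\det M'=\det A\cdot\det M^*$. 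Alternatively one can argue via second compound matrices or give the cofactor-expansion proof of the Desnanot--Jacobi identity. Any of these would be acceptable; an argument that presupposes the theorem is not.
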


We will use Theorem~\ref{thm: Jacobi's Theorem} by putting $M$ as the corners of the matrix,
making $M^*$ the interior.

\begin{proof}[Proof of Lemma \ref{lem:remove some variables from system}]
We proceed by induction on $k$.
For the inductive base $k=2$,
eliminate $x_1$ from equations $i=2,\ldots,n-1$ in $\mathcal{S}_1$
by subtracting the product of the first equation and $a_{i,1}$
from the product of the second equation and $a_{1,1}$.
It is routine to verify that for $i=2,\ldots,n-1$ and $j=2,\ldots,n$ we have \[
    b_{i,j}^{(k)} = \left|\begin{array}{cc}
            a_{1,1} & a_{1,j} \\
            a_{i,1} & a_{i,j}
        \end{array}\right|.
\]

Now assume the assertion is true for all $\ell$ where $1\leq\ell<k$.
In system $\mathcal{S}_{k-1}$ use equation $k-1$ to eliminate the variable $x_{k-1}$
from equations $k$, \ldots, $n-1$.
We obtain a new system of equations\[
    \mathcal{S}_k = \left\{\sum_{j=i}^n \beta_{i,j} x_j =0\right\}_{i=k}^{n-1}
\]
where for each $i,j,k$ we have
\begin{align*}
    \beta_{i,j} &= \left|\begin{array}{cc}
                b_{k-1,k-1}^{(k-1)} & b_{k-1,j}^{(k-1)} \\
                \\
                b_{i,k-1}^{(k-1)} & b_{i,j}^{(k-1)}
            \end{array}\right| \\
        &= \left|\begin{array}{cc}
            A_{k-2} & \begin{array}{c}
                    a_{1,k-1} \\ \vdots \\ a_{k-2,k-1}
                \end{array} \\
                a_{k-1,1} \quad \cdots \quad a_{k-1,k-2}
            & a_{k-1,k-1}
        \end{array}\right| \left|\begin{array}{cc}
            A_{k-2} & \begin{array}{c}
                    a_{1,j} \\ \vdots \\ a_{k-2,j}
                \end{array} \\
                a_{i,1} \quad \cdots \quad a_{i,k-2}
            & a_{i,j}
        \end{array}\right|\\&\quad - \left|\begin{array}{cc}
            A_{k-2} & \begin{array}{c}
                    a_{1,k-1} \\ \vdots \\ a_{k-2,k-1}
                \end{array} \\
                a_{i,1} \quad \cdots \quad a_{i,k-2}
            & a_{i,k-1}
        \end{array}\right| \left|\begin{array}{cc}
            A_{k-2} & \begin{array}{c}
                    a_{1,j} \\ \vdots \\ a_{k-2,j}
                \end{array} \\
                a_{k-1,1} \quad \cdots \quad a_{k-1,k-2}
            & a_{k-1,j}
        \end{array}\right|.
\end{align*}
Perform the following row and column swaps:\begin{itemize}
    \item in $b_{k-1,k-1}^{(k-1)}$, move the bottom row to the top, and the rightmost row to the leftmost;
    \item in $b_{k-1,j}^{(k-1)}$, do nothing;
    \item in $b_{i,k-1}^{(k-1)}$, move the rightmost row to the leftmost; and
    \item in $b_{i,j}^{(k-1)}$, move the bottom row to the top.
\end{itemize}
Denote the resulting matrices by $B_1$, $B_2$, $B_3$, and $B_4$;
the negatives introduced by the row and column swap cancel, so that $\beta_{i,j}=B_1B_2 - B_3B_4$.

Let \[
    C = \left|\begin{array}{ccccc}
            a_{k-1,k-1} & a_{k-1,1} & \cdots & a_{k-1,k-2} & a_{k-1,j} \\
            a_{1,k-1} & & & & a_{1,j} \\
            \vdots & & A_{k-2} & & \vdots \\
            a_{k-2,k-1} & & & & a_{k-2,j} \\
            a_{i,k-1} & a_{i,1} & \cdots & a_{i,k-2} & a_{i,j}
        \end{array}\right|.
\]
Theorem~\ref{thm: Jacobi's Theorem} with\[
    M=\left(\begin{array}{cc}a_{k-1,k-1} & a_{k-1,j} \\ a_{i,k-1} & a_{i,j} \end{array}\right)
    \mbox{ and }M^*=A_{k-2}
\]implies that\[
    \beta_{i,j} = \left|C\right| \cdot \left|A_{k-2}\right|.
\]
Move the top row of $C$ to the next-to-last row, and the leftmost row of $C$ to the next-to-last column;
the negatives introduced by the row and column swaps cancel, so that\[
    \beta_{i,j} = \left|\begin{array}{cccc}
              & & & a_{1,j} \\
              & A_{k-1} & & \vdots \\
              & & & a_{k-1,j} \\
             a_{i,1} & \cdots & a_{i,k-1} & a_{i,j}
        \end{array}\right|
        \left|A_{k-2}\right|.
\]From the assumption that $A_{k-2}$ is nonzero,
we can divide each equation of $\mathcal{S}_k$ by $A_{k-2}$,
obtaining the desired linear system.
\end{proof}
From this point on, the presence of several $S$-re\-pre\-sen\-ta\-tions requires
a notation that will allow us to distinguish them.

\begin{notation*}
Let $G\in\polyring^{m}$. Let $i,j\in\left\{ 1,\ldots,m-1\right\} $
be distinct. We write\[
\mathbf{h}^{\left(i,j\right)}=\left(h_{1}^{\left(i,j\right)},h_{2}^{\left(i,j\right)},\ldots,h_{m}^{\left(i,j\right)}\right)\]
for an $S$-re\-pre\-sen\-ta\-tion of $\SPoly{g_{i},g_{j}}$ with
respect to $G$. In addition, when $i<j$ we write\begin{align*}
Z_{i,j} & =-\lc{g_j}\sigma_{g_{i},g_{j}}+h_{i}^{\left(i,j\right)}\\
Z_{j,i} & =\lc{g_i}\sigma_{g_{j},g_{i}}+h_{j}^{\left(i,j\right)}.\end{align*}
Note that $\lt{Z_{i,j}}=\sigma_{g_{i},g_{j}}$ and $\lt{Z_{j,i}}=\sigma_{g_{j},g_{i}}$.\closer
\end{notation*}

In the proof of Lemma~\ref{lem: Chain implies certain lts} we will simplify
a linear system of the form shown in Lemma~\ref{lem:remove some variables from system}.
To perform this simplification,
we must ascertain that the matrices $A_k$ in that context have nonzero determinant.

\begin{lem}\label{lem:matrix nonsingular}
Let $G\in\polyring^m$. Then (A) $\Longrightarrow$ (B) where
\begin{lyxlist}{(B)}
\item[(A)] $\SPoly{g_1,g_2}$, $\SPoly{g_2,g_3}$, \ldots, and $\SPoly{g_{m-1},g_m}$ all have $S$-representations
  with respect to $G$.
\item[(B)] For each $k=2,\ldots,m-1$ the $k\times k$ matrix\[
    A_k = \left(\begin{array}{ccccc}
            Z_{2,1} & h_3^{(1,2)} & h_4^{(1,2)} & \cdots & h_{k+1}^{(1,2)} \\
            Z_{2,3} & Z_{3,2} & h_4^{(2,3)} & \cdots & h_{k+1}^{(2,3)} \\
            h_2^{(3,4)} & \ddots & \ddots & & h_{k+1}^{(3,4)}\\
            \vdots & & \ddots & \ddots & \vdots \\
            h_2^{(k,k+1)} & \cdots & h_{k-2}^{(k,k+1)} & Z_{k,k+1} & Z_{k+1,k}
        \end{array}\right)
\] has nonzero determinant; indeed $\lt{\det A_k} = \sigma_{2,1}\sigma_{3,2}\cdots\sigma_{k+1,k}$.\closer
\end{lyxlist}
\end{lem}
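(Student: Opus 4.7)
The plan is to use the Leibniz expansion of $\det A_k$ and show that exactly one permutation---the identity---contributes the dominant leading term, so no cancellation can destroy it.

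First I would reinterpret each row $i$ of $A_k$ as the syzygy on $\SPoly{g_i,g_{i+1}}$ restricted to the generators $g_2,\ldots,g_{k+1}$, with column $j$ holding the coefficient on $g_{j+1}$. From the definition of the $Z$'s and of an $S$-re\-pre\-sen\-ta\-tion, every entry then satisfies the uniform bound
$$\lt{A_k[i,j]}\cdot\lt{g_{j+1}} \ordle \lcm\!\left(\lt{g_i},\lt{g_{i+1}}\right),$$
with equality exactly when the entry is $Z_{i,i+1}$ (column $j=i-1$) or $Z_{i+1,i}$ (column $j=i$), and strict inequality whenever the entry is some $h_{j+1}^{(i,i+1)}$.

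Next, for each permutation $\pi\in S_k$ I would multiply this bound across all rows. Because $\pi$ permutes $\{1,\ldots,k\}$, the factors $\lt{g_{\pi(i)+1}}$ on the left cancel against the factors $\lt{g_{i+1}}$ arising from the identity $\lcm(\lt{g_i},\lt{g_{i+1}})=\sigma_{i+1,i}\lt{g_{i+1}}$ on the right, leaving
$$\prod_{i=1}^k \lt{A_k[i,\pi(i)]} \ordle \prod_{i=1}^k \sigma_{i+1,i},$$
with equality iff $\pi(i)\in\{i-1,i\}$ for every $i$. A short induction forced by $\pi(1)=1$ shows that the only such permutation is the identity, whose contribution $\prod_i Z_{i+1,i}$ has leading term exactly $\sigma_{2,1}\sigma_{3,2}\cdots\sigma_{k+1,k}$ by Proposition~\ref{pro: properties of lts}(B).

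The main obstacle is making the ``equality forces $\pi=\mathrm{id}$'' step airtight; this combines an entry-by-entry comparison (the $h$-entries sit strictly below the lcm by the very definition of $S$-re\-pre\-sen\-ta\-tion) with the combinatorial observation that no near-diagonal permutation other than the identity exists. Once that is in hand, every summand for $\pi\ne\mathrm{id}$ has leading term strictly $\ordlt\prod_i\sigma_{i+1,i}$, so nothing can cancel the identity contribution. We conclude $\lt{\det A_k}=\sigma_{2,1}\sigma_{3,2}\cdots\sigma_{k+1,k}$, which is a nonzero term, establishing~(B).
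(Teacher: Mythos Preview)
Your proof is correct and follows essentially the same approach as the paper: both use the Leibniz expansion, identify the main-diagonal product as having leading term $\prod_i\sigma_{i+1,i}$, and bound every other elementary product strictly below it via the defining inequality $\lt{h_{j+1}^{(i,i+1)}}\cdot t_{j+1}\ordlt\lcm(t_i,t_{i+1})$ of an $S$-representation. Your packaging---a uniform entry bound, multiplication over $\pi$, cancellation of the $\lt{g_{\pi(i)+1}}$ factors, and the observation that $\pi(i)\in\{i-1,i\}$ for all $i$ forces $\pi=\mathrm{id}$ by induction from $\pi(1)=1$---is a slightly cleaner variant of the paper's contradiction argument with its partition into $\mathcal{D},\mathcal{L},\mathcal{O}$, but the mathematical content is the same.
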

The proof of Lemma~\ref{lem:matrix nonsingular} is tricky,
so we present a simple but nontrivial example to illustrate the strategy.
\begin{example}
Suppose $m>3$ and the system $G\in\polyring^m$ satisfies (A) of Lemma~\ref{lem:matrix nonsingular}.
We show that (B) is satisfied for $k=3$. A determinant is a sum of elementary products; since\[
    A_3 = \left(\begin{array}{ccc}
            Z_{2,1} & h_3^{(1,2)} & h_4^{(1,2)} \\
            Z_{2,3} & Z_{3,2} & h_4^{(2,3)} \\
            h_2^{(3,4)} & Z_{3,4} & Z_{4,3}
        \end{array}\right)
\]and the leading term of $Z_{2,1}Z_{3,2}Z_{4,3}$ is $\tau=\sigma_{2,1}\sigma_{3,2}\sigma_{4,3}$,
the leading term of at least one elementary product of $\det A_3$ has the desired form.

We claim that the leading term of every other elementary product of $\det A_3$ is smaller than $\tau$.
We proceed by way of contradiction.
Assume that some other term in the elementary product has a leading term greater than or equal to $\tau$.
Consider the leading terms of the other five polynomials, denoting $\lcm\left(\lt{g_i},\lt{g_j}\right)$
by $L_{i,j}$ and $\lt{g_i}$ by $t_i$.
\begin{lyxlist}{Case 9:}
\item[Case 1:] Suppose that $\tau\ordle\lt{h_3^{(1,2)}\cdot h_4^{(2,3)}\cdot h_2^{(3,4)}}$.
    Multiply both sides of the inequality by $t_2t_3t_4$ to obtain\[
        L_{1,2}L_{2,3}L_{3,4}
            \ordle \left[t_3\cdot\lt{h_3^{(1,2)}}\right]\left[t_4\cdot\lt{h_4^{(2,3)}}\right]\left[t_2\cdot\lt{h_2^{(3,4)}}\right],
    \]which contradicts the definition of an $S$-representation.
\item[Case 2:] Suppose that $\tau\ordle\lt{h_4^{(1,2)}\cdot Z_{2,3}\cdot Z_{3,4}}$.
    Multiply both sides of the inequality by $t_2t_3t_4$ to obtain\[
        L_{1,2}L_{2,3}L_{3,4}
            \ordle \left[t_4\cdot\lt{h_4^{(1,2)}}\right]\cdot L_{2,3}\cdot L_{3,4},
    \]and divide both sides by the common lcm's to obtain\[
        L_{1,2}\ordle t_4\cdot\lt{h_4^{(1,2)}},
    \]which contradicts the definition of an $S$-representation.
\item[Case 3:] Suppose that $\tau\ordle\lt{h_2^{(3,4)}\cdot Z_{3,2}\cdot h_4^{(1,2)}}$.
    Multiply both sides of the inequality by $t_2t_3t_4$ to obtain\[
        L_{1,2}L_{2,3}L_{3,4}
            \ordle \left[t_2\cdot\lt{h_2^{(3,4)}}\right]\cdot L_{2,3}\cdot\left[t_4\cdot\lt{h_4^{(1,2)}}\right],
    \]and divide both sides by the common lcm to obtain\[
        L_{1,2}L_{3,4}\ordle \left[t_2\cdot\lt{h_2^{(3,4)}}\right]\left[t_4\cdot\lt{h_4^{(1,2)}}\right],
    \]which contradicts the definition of an $S$-representation.
\item[Case 4:] Suppose that $\tau\ordle\lt{Z_{(3,4)}\cdot h_4^{(2,3)}\cdot Z_{2,1}}$.
    Multiply both sides of the inequality by $t_2t_3t_4$ to obtain\[
        L_{1,2}L_{2,3}L_{3,4}
            \ordle L_{3,4}\cdot\left[t_4\cdot\lt{h_4^{(2,3)}}\right]\cdot L_{1,2},
    \]and divide both sides by the common lcm's to obtain\[
        L_{2,3}\ordle t_4\cdot\lt{h_4^{(2,3)}},
    \]which contradicts the definition of an $S$-representation.
\item[Case 5:] Suppose that $\tau\ordle\lt{Z_{4,3}\cdot Z_{2,3}\cdot h_3^{(1,2)}}$.
    Multiply both sides of the inequality by $t_2t_3t_4$ to obtain\[
        L_{1,2}L_{2,3}L_{3,4}
            \ordle L_{3,4}\cdot L_{2,3}\cdot\left[t_3\cdot\lt{h_3^{(1,2)}}\right],
    \]and divide both sides by the common lcm's to obtain\[
        L_{1,2}\ordle t_3\cdot\lt{h_3^{(1,2)}},
    \]which contradicts the definition of an $S$-representation.\closer
\end{lyxlist}
\end{example}
The proof of Lemma \ref{lem:matrix nonsingular} follows this strategy.
It is clear from the main diagonal of each $A_k$
that the leading term $t$ of one elementary product of the determinant of $A_k$ has the desired form;
assume by way of contradiction that the leading term of another elementary product is greater than or equal to $t$;
simplify the equivalent inequality by clearing the denominators and dividing the lcm's;
the resulting inequality will contradict the definition of an $S$-repre\-sen\-tation.

\begin{proof}[Proof of Lemma \ref{lem:matrix nonsingular}]\label{pf:Proof of lem matrix nonsingular}
It is clear that $\det A_k$ is a polynomial, each of whose terms is an elementary product of the matrix.
We can write any elementary product as $T=\prod_{i=1}^k B_i$
such that\begin{itemize}
\item each $B_i$ is an element of row $i$; and
\item if $i\neq j$ then $B_i$ and $B_j$ are elements of different columns.
\end{itemize}
As noted above, the main diagonal $A_k$ produces an elementary product whose leading term has the desired form;
we claim that every other elementary product has a smaller leading term.

We proceed by way of contradiction. Assume that some elementary product $T$ besides the main diagonal
satisfies \begin{equation}\label{eq:elementary product}\prod_{i=1}^k\sigma_{i+1,i}\ordle\lt T.\end{equation}
Partition the set of factors of $T$ into three sets:\begin{itemize}
\item $\mathcal D$, containing those factors which are on the main diagonal,
    which have the form $Z_{i+1,i}$ for some $i=1,\ldots,k$;
\item $\mathcal L$, containing those factors which are immediately below the main diagonal,
    which have the form $Z_{i,i+1}$ for some $i=2,\ldots,k$; and
\item $\mathcal O$, containing the other factors, which have the form $h_i^{(j,j+1)}$ for appropriate $i,j$.
\end{itemize}
Since $T$ is not the product of the main diagonal,
the uniqueness of row and column representatives among the factors of $T$
implies that $\mathcal O$ is guaranteed to be nonempty.

Denote $\lcm\left(\lt{g_i},\lt{g_j}\right)$ by $L_{i,j}$ and $\lt{g_i}$ by $t_i$.
Multiply both sides of \eqref{eq:elementary product} by $\prod_{\ell=2}^{k+1}t_\ell$.
This results in the equation\begin{equation*}
\prod_{i=1}^kt_{i+1}\cdot \sigma_{i+1,i}\ordle
    \prod_{\ell=2}^{k+1}t_\ell \cdot
    \prod_{Z_{i+1,i}\in\mathcal D}\sigma_{i+1,i} \cdot
    \prod_{Z_{i,i+1}\in\mathcal L}\sigma_{i,i+1} \cdot
    \prod_{h_i^{(j,j+1)}\in\mathcal O}h_i^{(j,j+1)}.
\end{equation*}
Simplify the left hand side to obtain
\begin{equation}\label{eq:simplified left}
\prod_{i=1}^kL_{i,i+1}\ordle
    \prod_{\ell=2}^{k+1}t_\ell \cdot
    \prod_{Z_{i+1,i}\in\mathcal D}\sigma_{i+1,i} \cdot
    \prod_{Z_{i,i+1}\in\mathcal L}\sigma_{i,i+1} \cdot
    \prod_{h_i^{(j,j+1)}\in\mathcal O}h_i^{(j,j+1)}.
\end{equation}
Rearrange the right hand side of \eqref{eq:simplified left}
by pairing each $t_\ell$ with the corresponding factor taken from column $\ell-1$.
The uniqueness of column representatives among the factors of an elementary product of a matrix guarantees a one-to-one pairing.
If $t_\ell$ is paired with an element of\begin{itemize}
\item $\mathcal D$, it is paired with $Z_{\ell,\ell-1}$, and the product simplifies to $L_{\ell-1,\ell}$;
\item $\mathcal L$, it is paired with $Z_{\ell,\ell+1}$, and the product simplifies to $L_{\ell,\ell+1}$;
\item if $t_\ell$ is paired with an element of $\mathcal O$, it is paired with $h_\ell^{(j,j+1)}$
for appropriate $j$.
\end{itemize}
In addition, the uniqueness of row representatives among the factors of an elementary product
implies that for each $i$, at most one pairing simplifies to $L_{i,i+1}$.
Thus, if we simplify the right hand side of \eqref{eq:simplified left} we have
\begin{equation*}
\prod_{i=1}^kL_{i,i+1}\ordle
    \prod_{h_i^{(j,j+1)}\not\in\mathcal O}L_{i,i+1} \cdot
    \prod_{h_i^{(j,j+1)}\in\mathcal O}t_i h_i^{(j,j+1)}.
\end{equation*}
Divide both sides by $\prod_{h_i\not\in\mathcal O}L_{i,i+1}$ and we have\[
\prod_{h_i^{(j,j+1)}\in\mathcal O}L_{i,i+1}\ordle\prod_{h_i^{(j,j+1)}\in\mathcal O}t_i h_i^{(j,j+1)}.
\]
Recall that $\mathcal O$ was guaranteed to be nonempty,
so these products are greater than 1.
This contradicts the definition of an $S$-representation.

We have shown that the leading term of the elementary product of $\det A_k$
formed on the main diagonal is $\prod_{i=1}^k\sigma_{i+1,i}$,
while the leading terms of the remaining elementary products are strictly smaller.
The sum of the elementary products thus derives its leading term from the main diagonal,
whose leading term is the form described by (B).
\end{proof}

Finally we turn to the proof of Lemma \ref{lem: Chain implies certain lts}.

\begin{proof}
[Proof of Lemma \ref{lem: Chain implies certain lts}]Assume (A).
We must show (B).

For each $i=1,\ldots,m-1$ fix $\mathbf{h}^{\left(i,i+1\right)}$,
an $S$-re\-pre\-sen\-ta\-tion of $\SPoly{g_{i},g_{i+1}}$. We
have the system of $m-1$ equations\[
\begin{array}{rlllll}
Z_{1,2}g_{1} & +Z_{2,1}g_{2} & +h_{3}^{\left(1,2\right)}g_{3} & +\cdots & +h_{m}^{\left(1,2\right)}g_{m} & =0\\
 &  &  &  &  & \vdots\\
h_{1}^{\left(m-1,m\right)}g_{1} & +\cdots & +h_{m-2}^{\left(m-1,m\right)}g_{m-2} & +Z_{m-1,m}g_{m-1} & +Z_{m,m-1}g_{m} & =0.\end{array}\]
Eliminate $g_2$, \ldots, $g_{m-1}$ from the system.
By Lemmas~\ref{lem:remove some variables from system} and~\ref{lem:matrix nonsingular}
(with $x_i=g_{i+1}$ for $i=1,\ldots,m-2$)
we obtain $g_{1}P=g_{m}Q$ where\[
P=\left|\begin{array}{cccccc}
Z_{1,2} & Z_{2,1} & h_{3}^{\left(1,2\right)} & \cdots & h_{m-2}^{\left(1,2\right)} & h_{m-1}^{\left(1,2\right)}\\
h_{1}^{\left(2,3\right)} & Z_{2,3} & Z_{3,2} & \ddots & h_{m-2}^{\left(2,3\right)} & h_{m-1}^{\left(2,3\right)}\\
\vdots &  & \ddots\\
h_{1}^{\left(m-1,m\right)} & h_{2}^{\left(m-1,m\right)} & h_{3}^{\left(m-1,m\right)} &  & h_{m-2}^{\left(m-1,m\right)} & Z_{m-1,m}\end{array}\right|\]
and\[
Q=\left|\begin{array}{cccccc}
Z_{2,1} & h_{3}^{\left(1,2\right)} & \cdots & h_{m-2}^{\left(1,2\right)} & h_{m-1}^{\left(1,2\right)} & h_{m}^{\left(1,2\right)}\\
 & \ddots &  &  & \ddots & \vdots\\
h_{2}^{\left(m-2,m-1\right)} & h_{3}^{\left(m-2,m-1\right)} & \ddots & Z_{m-2,m-1} & Z_{m-1,m-2} & h_{m}^{\left(m-2,m-1\right)}\\
h_{2}^{\left(m-1,m\right)} & h_{3}^{\left(m-1,m\right)} & \cdots & h_{m-2}^{\left(m-1,m\right)} & Z_{m-1,m} & Z_{m,m-1}\end{array}\right|.\]
To show that $\lt{P}$ and $\lt{Q}$ have the form specified by the lemma,
apply an argument similar to the one used to prove Lemma~\ref{lem:matrix nonsingular}.
\end{proof}

%We use Lemma \ref{lem: Chain implies certain lts} to start digging
%into the structure guaranteed by the Extended Criterion.
Gr\"obner basis theory generalizes many algorithms for univariate polynomials
to systems of multivariate polynomials;
one oft-cited example is how Buchberger's algorithm to compute a Gr\"obner basis
can be viewed as a generalization of the Euclidean algorithm to compute the gcd.
We likewise expect relationships to exist between the $S$-poly\-nomials and the gcd's of polynomials.

Moreover, the construction of $S$-poly\-nomials relies on the computation of\[
    \sigma_{g_i,g_j}=\frac{\lcm\left(\lt{g_i},\lt{g_j}\right)}{\lt{g_i}}
\]which can be rewritten as\[
    \sigma_{g_i,g_j}=\frac{\lt{g_j}}{\gcd\left(\lt{g_i},\lt{g_j}\right)}.
\]Based on this, one might expect the existence of criteria on $S$-poly\-nomials
that relate the gcd of two polynomials with the gcd of their leading terms.

One such criterion exists for two polynomials: if $G=\{g_1,g_2\}$ is a Gr\"obner basis,
then the $S$-poly\-nomial of $g_1$ and $g_2$ reduces to zero,
and in addition $g_1=f_1p$ and $g_2=f_2p$ where $p=\gcd\left(g_1,g_2\right)$ and the leading terms of $f_1$ and $f_2$
are relatively prime \cite{Adams94}.
In this case, we infer a surprising fact. Observe that\begin{align*}
\lt{\gcd\left(g_1,g_2\right)}&=\lt{\gcd\left(f_1p,f_2p\right)}\\
    &=\lt{\gcd\left(f_1,f_2\right)\cdot p}.
\end{align*}Since $p$ is the gcd of $g_1$ and $g_2$,
we know that $f_1$ and $f_2$ must be relatively prime, so
\begin{align*}
\lt{\gcd\left(g_1,g_2\right)}&=\lt{1}\cdot\lt{p}\\
    &=\gcd\left(\lt{f_1},\lt{f_2}\right)\cdot\lt{p}\\
    &=\gcd\left(\lt{f_1}\lt{p},\lt{f_2}\lt{p}\right)\\
    &=\gcd\left(\lt{g_1},\lt{g_2}\right).
\end{align*}
Lemma~\ref{lem: Chain implies gcd commutes} generalizes this observation
in a way that does not require a Gr\"obner basis,
but does require the Extended Criterion!

\begin{lem}
\label{lem: Chain implies gcd commutes}Let $G\in\polyring^{m}$,
and suppose that the leading terms of $G$ satisfy the Extended Criterion.
Then (A)$\Rightarrow$(B) where
\begin{lyxlist}{(B)}
\item [{(A)}] Each of $\SPoly{g_{1},g_{2}}$, $\SPoly{g_{2},g_{3}}$, \ldots{},
$\SPoly{g_{m-1},g_{m}}$ has an $S$-re\-pre\-sen\-ta\-tion with
respect to $G$.
\item [{(B)}] $\gcd\left(\lt{g_{1}},\lt{g_{m}}\right)=\lt{\gcd\left(g_{1},g_{m}\right)}$.\closer
\end{lyxlist}
\end{lem}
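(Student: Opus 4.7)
My plan is to feed the chain of $S$-representations into Lemma~\ref{lem: Chain implies certain lts} to obtain an identity $Pg_1=Qg_m$ with tightly prescribed leading terms, translate this via unique factorization in the UFD $\polyring$ into divisibilities among the cofactors of $\gcd(g_1,g_m)$, and finally use (EVar) to rule out any extraneous common variable in $\gcd(\lt{g_1},\lt{g_m})$. Although the hypothesis is full EC, only (EVar) will actually be needed; (EDiv) plays no visible role here.

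Set $d=\gcd(g_1,g_m)$ and write $g_1=du$, $g_m=dv$ with $\gcd(u,v)=1$. Proposition~\ref{pro: properties of lts}(B) gives
\[
\gcd(\lt{g_1},\lt{g_m}) = \lt{d}\cdot\gcd(\lt{u},\lt{v}), \qquad \lt{\gcd(g_1,g_m)} = \lt{d},
\]
so the lemma reduces to the claim $\gcd(\lt{u},\lt{v})=1$. Hypothesis (A) permits Lemma~\ref{lem: Chain implies certain lts} to supply $P,Q\in\polyring$ with $Pg_1=Qg_m$, $\lt{P}=\prod_{i=1}^{m-1}\sigma_{g_i,g_{i+1}}$, and $\lt{Q}=\prod_{i=1}^{m-1}\sigma_{g_{i+1},g_i}$. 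Cancelling the common factor $d$ from $Pdu=Qdv$ yields $Pu=Qv$; the coprimality of $u$ and $v$ then forces $v\mid P$ and $u\mid Q$, so $\lt{v}\mid\lt{P}$ and $\lt{u}\mid\lt{Q}$ by Proposition~\ref{pro: properties of lts}(C).

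For the endgame I would suppose, toward a contradiction, that some variable $x$ divides $\gcd(\lt{u},\lt{v})$. Then $x$ divides both $\lt{g_1}$ and $\lt{g_m}$, so $\deg_x\gcd(\lt{g_1},\lt{g_m})>0$, and (EVar) makes $\{\deg_x\lt{g_k}\}_{k=1}^m$ monotonic. The formula $\deg_x\sigma_{g_j,g_i}=\max(0,\deg_x\lt{g_i}-\deg_x\lt{g_j})$ then telescopes cleanly: in the nondecreasing case every factor of $\lt{Q}$ contributes $0$ to $\deg_x$, contradicting $x\mid\lt{u}\mid\lt{Q}$; in the nonincreasing case the same happens to $\lt{P}$, contradicting $x\mid\lt{v}\mid\lt{P}$.

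The main obstacle is conceptual rather than computational: one must recognize that Lemma~\ref{lem: Chain implies certain lts} is precisely the right tool, producing a single polynomial identity whose leading-term shape is rigid enough for (EVar) to bite. Once that identity is in hand, the reduction to $\gcd(\lt{u},\lt{v})=1$ via UFD-factorization and the degree bookkeeping in the final step are routine.
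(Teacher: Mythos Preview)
Your proof is correct and follows essentially the same approach as the paper: invoke Lemma~\ref{lem: Chain implies certain lts} to obtain $Pg_1=Qg_m$, cancel $\gcd(g_1,g_m)$, use coprimality of the cofactors to force divisibility into $P$ and $Q$, and then let (EVar) finish the degree count. Your endgame is slightly more streamlined than the paper's---you exploit both divisibilities $u\mid Q$ and $v\mid P$ symmetrically and argue directly that $\deg_x\lt{Q}=0$ or $\deg_x\lt{P}=0$, whereas the paper uses only $f_1\mid Q$ and passes through the summed inequality~\eqref{eq: constraint on inner gcds}---and your observation that (EDiv) is never used is accurate for the paper's proof as well.
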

\begin{proof}
Assume (A). We must show (B). For the sake of convenience, denote
$\lt{g_{i}}$ by $t_{i}$.

By Lemma~\ref{lem: Chain implies certain lts}, we have\[
g_{1}P=g_{m}Q\]
where\[
\lt{P}=\sigma_{g_{1},g_{2}}\sigma_{g_{2},g_{3}}\cdots\sigma_{g_{m-1},g_{m}}\quad\mbox{and}\quad\lt{Q}=\sigma_{g_{2},g_{1}}\sigma_{g_{3},g_{2}}\cdots\sigma_{g_{m},g_{m-1}}.\]
Let $p=\gcd\left(g_{1},g_{m}\right)$ and put $f_{1}=g_{1}/p$ and
$f_{m}=g_{m}/p$. Then\begin{equation}
f_{1}P=f_{m}Q.\label{eq: Pf_1 = Qf_4}\end{equation}
Since $f_{1},f_{m}$ are relatively prime, $f_{1}\mid Q$. Thus $\lt{f_{1}}$
divides $\lt{Q}$.

Observe that for any $i=1,\ldots,m-1$, we have\[
\sigma_{g_{i+1},g_{i}}=\frac{\lcm\left(t_{i},t_{i+1}\right)}{t_{i+1}}=\frac{t_{i}}{\gcd\left(t_{i},t_{i+1}\right)}.\]
Thus\[
\lt{f_{1}}\mid\frac{t_{1}t_{2}\cdots t_{m-1}}{\gcd\left(t_{1},t_{2}\right)\gcd\left(t_{2},t_{3}\right)\cdots\gcd\left(t_{m-1,m}\right)}.\]
Denote $\gcd\left(t_{i},t_{j}\right)$ by $d_{i,j}$. For all variables
$x$, we have\begin{align*}
\deg_{x}\lt{f_{1}} & \leq\deg_{x}\frac{t_{1}\cdots t_{m-1}}{d_{1,2}d_{2,3}\cdots d_{m-1,m}}.\end{align*}
Recall that $f_{1}=g_{1}/p$. For all variables $x$, we have \begin{align}
\deg_{x}t_{1}-\deg_{x}\lt{p} & \leq\sum_{1\leq i<m}\deg_{x}t_{i}-\sum_{1\leq i<m}\deg_{x}d_{i,i+1}\nonumber \\
\sum_{1\leq i<m}\deg_{x}d_{i,i+1} & \leq\deg_{x}\lt{p}+\sum_{1<i<m}\deg_{x}t_{i}.\label{eq: constraint on inner gcds}\end{align}

We claim that for all variables $x$, $\deg_{x}d_{1,m}\leq\deg_{x}\lt{p}$.
Let $x$ be arbitrary, but fixed. If $\deg_{x}t_{1}=0$ or $\deg_{x}t_{m}=0$,
the claim is trivially true. So assume $\deg_{x}t_{1}\neq0$ and $\deg_{x}t_{m}\neq0$.
We consider two cases.

If $\deg_{x}t_{1}\leq\deg_{x}t_{m}$, then $\deg_{x}d_{1,m}=\deg_{x}t_{1}$.
Recall that $t_{1}$, \ldots{}, $t_{m}$ satisfy EC. Therefore $\deg_{x}t_{1}\leq\deg_{x}t_{2}\leq\cdots\leq\deg_{x}t_{m}$.
Thus $\deg_{x}d_{i,i+1}=\deg_{x}t_{i}$ for all $i$ such that $1\leq i\leq m-1$.
Apply this to \eqref{eq: constraint on inner gcds} to obtain\[
\deg_{x}d_{1,m}=\deg_{x}t_{1}\leq\deg_{x}\lt{p}.\]

If $\deg_{x}t_{1}\geq\deg_{x}t_{m}$, a similar argument gives $\deg_{x}d_{1,m}\leq\deg_{x}\lt{p}$.

Since $x$ is arbitrary, $d_{1,m}$ divides $\lt{p}$, or equivalently
$\gcd\left(\lt{g_{1}},\lt{g_{m}}\right)$ divides $\lt{\gcd\left(g_{1},g_{m}\right)}$.
That $\lt{\gcd\left(g_{1},g_{m}\right)}$ divides $\gcd\left(\lt{g_{1}},\lt{g_{m}}\right)$
is trivial. Hence $\lt{\gcd\left(g_{1},g_{m}\right)}=\gcd\left(\lt{g_{1}},\lt{g_{m}}\right)$.
\end{proof}
The following result will be useful both for the proof of the Main
Theorem and for Section~\ref{sec: class of systems}.

\begin{cor}
\label{cor:EFC implies reprime lts}Let $G\in\polyring^{m}$, and
suppose that the leading terms of $G$ satisfy the Extended Criterion.
Then (A)$\Rightarrow$(B) where
\begin{lyxlist}{(B)}
\item [{(A)}] $\SPoly{g_{1},g_{2}}$, $\SPoly{g_{2},g_{3}}$, \ldots{},
$\SPoly{g_{m-1},g_{m}}$ all have $S$-re\-pre\-sen\-ta\-tions
with respect to $G$.
\item [{(B)}] If $p=\gcd\left(g_{1},g_{m}\right)$, then $\lt{g_{1}/p}$
and $\lt{g_{m}/p}$ are relatively prime.\closer
\end{lyxlist}
\end{cor}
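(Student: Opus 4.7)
The corollary is essentially a direct algebraic consequence of Lemma~\ref{lem: Chain implies gcd commutes}, so the plan is short. Assume (A). Set $p = \gcd(g_1, g_m)$ and let $f_1 = g_1/p$, $f_m = g_m/p$, so that $g_1 = f_1 p$ and $g_m = f_m p$ in $\polyring$. The goal is to show $\gcd(\lt{f_1}, \lt{f_m}) = 1$.

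First, I would invoke Lemma~\ref{lem: Chain implies gcd commutes}, whose hypotheses are exactly (A) together with the standing assumption that the leading terms of $G$ satisfy EC. This yields the key identity
\[
    \gcd(\lt{g_1}, \lt{g_m}) = \lt{p}.
\]
Next, I would apply Proposition~\ref{pro: properties of lts}(B) twice to write $\lt{g_1} = \lt{f_1}\cdot\lt{p}$ and $\lt{g_m} = \lt{f_m}\cdot\lt{p}$. Factoring $\lt{p}$ out of the gcd gives
\[
    \gcd(\lt{g_1}, \lt{g_m}) = \lt{p}\cdot\gcd(\lt{f_1}, \lt{f_m}).
\]

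Combining these two equalities yields $\lt{p} = \lt{p}\cdot\gcd(\lt{f_1}, \lt{f_m})$, and since $\lt{p}$ is a nonzero term we may cancel it to conclude $\gcd(\lt{f_1}, \lt{f_m}) = 1$, which is precisely (B). There is no real obstacle here; the only substantive work was already absorbed into Lemma~\ref{lem: Chain implies gcd commutes}, and the corollary simply repackages that identity by dividing through by $\lt{p}$. The one thing I would double-check while writing is that $p \neq 0$ (immediate, since $g_1, g_m$ are assumed nonzero, else $\SPoly{g_{m-1}, g_m}$ would not be defined) so that the division $g_1/p$, $g_m/p$ and the cancellation of $\lt{p}$ are legitimate.
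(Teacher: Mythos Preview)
Your proof is correct and follows essentially the same approach as the paper: both invoke Lemma~\ref{lem: Chain implies gcd commutes} to obtain $\gcd(\lt{g_1},\lt{g_m})=\lt{p}$, write $\lt{g_i}=\lt{f_i}\lt{p}$, and then deduce $\gcd(\lt{f_1},\lt{f_m})=1$. The only cosmetic difference is in the final step---the paper argues variable-by-variable via degrees, whereas you factor $\lt{p}$ out of the gcd directly; your version is arguably the cleaner of the two.
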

\begin{proof}
Assume (A). Let $p=\gcd\left(g_{1},g_{m}\right)$, and denote $g_{1}/p$
and $g_{m}/p$ by $f_{1}$ and $f_{m}$, respectively. From Lemma~\ref{lem: Chain implies gcd commutes},
we know that\[
\gcd\left(\lt{g_{1}},\lt{g_{m}}\right)=\lt{p}.\]
Thus for any variable $x$,\begin{align*}
\deg_{x}\gcd\left(\lt{g_{1}},\lt{g_{m}}\right) & =\deg_{x}\lt{g_{1}}-\deg_{x}\lt{f_{1}}\\
 & =\deg_{x}\lt{g_{m}}-\deg_{x}\lt{f_{m}}.\end{align*}
Let $x$ be arbitrary, but fixed. If $\deg_{x}\lt{g_{1}}\leq\deg_{x}\lt{g_{m}}$,
then\[
\deg_{x}\lt{g_{1}}=\deg_{x}\gcd\left(\lt{g_{1}},\lt{g_{m}}\right)=\deg_{x}\lt{g_{1}}-\deg_{x}\lt{f_{1}},\]
so $\deg_{x}\lt{f_{1}}=0$. Similar reasoning shows that if $\deg_{x}\lt{g_{1}}\geq\deg_{x}\lt{g_{m}}$,
then $\deg_{x}\lt{f_{m}}=0$. It follows that $\lt{g_{1}/p}$ and
$\lt{g_{m}/p}$ are relatively prime.
\end{proof}
Theorem~\ref{thm: completion of sufficiency} is the main tool used
to prove the Main Theorem. Note that a similar statement holds for
Buchberger's lcm Criterion, although the chain needed for the lcm
Criterion, unlike the chain for the Extended Criterion, does not need
to use all the polynomials of $G$.

\begin{thm}
\label{thm: completion of sufficiency}Let $G\in\polyring^{m}$, and
suppose that the leading terms of $G$ satisfy the Extended Criterion.
Then (A)$\Rightarrow$(B) where
\begin{lyxlist}{(B)}
\item [{(A)}] $\SPoly{g_{1},g_{2}}$, $\SPoly{g_{2},g_{3}}$, \ldots{},
$\SPoly{g_{m-1},g_{m}}$ all have $S$-re\-pre\-sen\-ta\-tions
with respect to $G$.
\item [{(B)}] $\SPoly{g_{1},g_{m}}$ has an $S$-re\-pre\-sen\-ta\-tion
with respect to $G$.\closer
\end{lyxlist}
\end{thm}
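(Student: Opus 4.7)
The plan is to deduce an $S$-re\-pre\-sen\-ta\-tion of $\SPoly{g_{1}, g_{m}}$ by applying the classical coprime-leading-terms argument to the cofactors of $\gcd(g_{1}, g_{m})$. Let $p = \gcd(g_{1}, g_{m})$ and set $f_{1} = g_{1}/p$ and $f_{m} = g_{m}/p$. The hypotheses of the theorem match those of Corollary~\ref{cor:EFC implies reprime lts}, so I would invoke it to conclude that $\lt{f_{1}}$ and $\lt{f_{m}}$ are relatively prime. Simultaneously, Lemma~\ref{lem: Chain implies gcd commutes} gives $\gcd(\lt{g_{1}}, \lt{g_{m}}) = \lt{p}$, so $\lcm(\lt{g_{1}}, \lt{g_{m}}) = \lt{f_{1}}\lt{f_{m}}\lt{p}$, $\sigma_{g_{1},g_{m}} = \lt{f_{m}}$, and $\sigma_{g_{m},g_{1}} = \lt{f_{1}}$.

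Next, write $f_{1} = \lc{f_{1}}\lt{f_{1}} + r_{1}$ and $f_{m} = \lc{f_{m}}\lt{f_{m}} + r_{m}$ with $\lt{r_{i}} \ordlt \lt{f_{i}}$ (allowing $r_{i} = 0$). Using the multiplicativity $\lc{g_{i}} = \lc{p}\,\lc{f_{i}}$, I would compute
\begin{align*}
\SPoly{g_{1}, g_{m}} &= \lc{p}\lc{f_{m}}\lt{f_{m}}\cdot p f_{1} - \lc{p}\lc{f_{1}}\lt{f_{1}}\cdot p f_{m} \\
&= \lc{p}\cdot p \cdot \bigl((f_{m} - r_{m})\, f_{1} - (f_{1} - r_{1})\, f_{m}\bigr) \\
&= \lc{p}\bigl(r_{1} g_{m} - r_{m} g_{1}\bigr),
\end{align*}
since $p f_{1} = g_{1}$ and $p f_{m} = g_{m}$. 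This expresses $\SPoly{g_{1}, g_{m}}$ as an $\polyring$-linear combination of $g_{1}$ and $g_{m}$, with zero coefficients on the remaining $g_{i}$.

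To finish, I would verify the $S$-re\-pre\-sen\-ta\-tion condition. The leading term of $-\lc{p} r_{m} g_{1}$ is $\lt{r_{m}}\lt{f_{1}}\lt{p}$, which is strictly smaller than $\lt{f_{m}}\lt{f_{1}}\lt{p} = \lcm(\lt{g_{1}}, \lt{g_{m}})$ because $\lt{r_{m}} \ordlt \lt{f_{m}}$; the symmetric check for $\lc{p} r_{1} g_{m}$ is identical. Thus the displayed expression is the required $S$-re\-pre\-sen\-ta\-tion of $\SPoly{g_{1}, g_{m}}$ with respect to $G$.

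I do not foresee a genuine obstacle: once Corollary~\ref{cor:EFC implies reprime lts} is in hand, the remainder is the standard Buchberger gcd argument, specialized to the fact that $g_{1}$ and $g_{m}$ share a common factor $p$. The only real pitfall is keeping track of leading coefficients, since $\lc{g_{i}} = \lc{p}\lc{f_{i}}$ means the scalar $\lc{p}$ must be carried through the computation; otherwise the argument is elementary.
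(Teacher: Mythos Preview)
Your proof is correct and follows essentially the same route as the paper: invoke Lemma~\ref{lem: Chain implies gcd commutes} and Corollary~\ref{cor:EFC implies reprime lts}, rewrite $\SPoly{g_{1},g_{m}}$ in terms of the cofactors $f_{1},f_{m}$ of $p=\gcd(g_{1},g_{m})$, and then apply the coprime-leading-terms argument. The only difference is cosmetic---you carry out the Buchberger gcd computation explicitly (and track $\lc{p}$ carefully), whereas the paper packages the same step as ``$\SPoly{g_{1},g_{m}}=p\cdot\SPoly{f_{1},f_{m}}$, and $\SPoly{f_{1},f_{m}}$ has an $S$-representation by Buchberger's gcd Criterion.''
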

\begin{proof}
Assume (A). We want to show (B). For the sake of convenience, denote
$\lt{g_{i}}$ by $t_{i}$.

Recall that\begin{equation}
\SPoly{g_{1},g_{m}}=\lc{g_{m}}\cdot\frac{\lcm\left(t_{1},t_{m}\right)}{t_{1}}\cdot g_{1}-\lc{g_{1}}\cdot\frac{\lcm\left(t_{1},t_{m}\right)}{t_{m}}\cdot g_{m}.\label{eq: preparing to move from f to c}\end{equation}
Let $p=\gcd\left(g_{1},g_{m}\right)$ where $\lc{p}=1$. Put $f_{1}=g_{1}/p$
and $f_{m}=g_{m}/p$. From Lemma~\ref{lem: Chain implies gcd commutes},
we know that $\gcd\left(\lt{g_{1}},\lt{g_{m}}\right)=\lt{\gcd\left(g_{1},g_{m}\right)}$.
This and the facts $\lc{f_{1}}=\lc{g_{1}}$ and $\lc{f_{m}}=\lc{g_{m}}$
give\begin{align*}
\lc{g_{1}}\cdot\frac{\lcm\left(t_{1},t_{m}\right)}{t_{m}} & =\lc{g_{1}}\cdot\frac{t_{1}t_{m}}{t_{m}\gcd\left(t_{1},t_{m}\right)}=\lc{f_{1}}\cdot\lt{f_{1}}.\\
 & \mbox{and}\\
\lc{g_{m}}\cdot\frac{\lcm\left(t_{1},t_{m}\right)}{t_{1}} & =\lc{g_{m}}\cdot\frac{t_{1}t_{m}}{t_{1}\gcd\left(t_{1},t_{m}\right)}=\lc{f_{m}}\cdot\lt{f_{m}}\end{align*}
This allows us to rewrite \eqref{eq: preparing to move from f to c}
as\begin{align*}
\SPoly{g_{1},g_{m}} & =\lc{f_{m}}\lt{f_{m}}\cdot g_{1}-\lc{f_{1}}\lt{f_{1}}\cdot g_{m}\\
 & =p\cdot\SPoly{f_{1},f_{m}}.\end{align*}
By Corollary~\ref{cor:EFC implies reprime lts}, the leading terms
of $f_{1}$ and $f_{m}$ are relatively prime; by Buchberger's gcd
Criterion, $\SPoly{f_{1},f_{m}}$ has an $S$-re\-pre\-sen\-ta\-tion
$\mathbf{h}$. It follows that $\mathbf{h}p=\left(h_{1}p,\ldots,h_{m}p\right)$
is an $S$-re\-pre\-sen\-ta\-tion of $\SPoly{g_{1},g_{m}}$.
\end{proof}
Theorem~\ref{thm: completion of sufficiency} provides us with sufficient
information to conclude that the Main Theorem is true. This may not
be clear, because we have discussed only $S$-re\-pre\-sen\-ta\-tions,
and not reduction to zero. To show how the two come together,
we need to recall two additional results. The first is the characterization
of Gr\"obner bases due to Lazard \cite{Lazard93}.

\begin{thm}
[Lazard's Characterization]\label{thm: Lazards characterization}Let
$G\in\polyring^{m}$. The following are equivalent.
\begin{lyxlist}{(B)}
\item [{(A)}] $G$ is a Gr\"obner basis with respect to $\ordering$.
\item [{(B)}] For every $i,j$ such that $1\leq i<j\leq m$, $\SPoly{g_{i},g_{j}}$
has an $S$-re\-pre\-sen\-ta\-tion with respect to $G$.\closer
\end{lyxlist}
\end{thm}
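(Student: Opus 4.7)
The plan is to prove the two implications separately. The forward direction (A)$\Rightarrow$(B) reduces directly to Buchberger's Characterization (Theorem~\ref{thm: Buchberger's characterization}), which is already available. The reverse direction (B)$\Rightarrow$(A) is the substantive half and uses the classical minimum-term rewriting argument.

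For (A)$\Rightarrow$(B), assume $G$ is a Gr\"obner basis. Theorem~\ref{thm: Buchberger's characterization} supplies, for each pair $i<j$, a reduction-to-zero expansion $\SPoly{g_{i},g_{j}}=q_{1}g_{\nu_{1}}+\cdots+q_{r}g_{\nu_{r}}$. By induction on $k$, using part~(A) of Proposition~\ref{pro: properties of lts}, every term of $\SPoly{g_{i},g_{j}}-\sum_{\ell<k}q_{\ell}g_{\nu_{\ell}}$ is $\ordle\lt{\SPoly{g_{i},g_{j}}}$, so each $\lt{q_{k}g_{\nu_{k}}}\ordle\lt{\SPoly{g_{i},g_{j}}}$. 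Gathering the summands by index via $h_{\mu}=\sum_{\nu_{k}=\mu}q_{k}$ yields a $t$-representation of $\SPoly{g_{i},g_{j}}$ with $t=\lt{\SPoly{g_{i},g_{j}}}$. Since the two leading summands in the very definition of $\SPoly{g_{i},g_{j}}$ are designed to cancel, we have $t\ordlt\lcm(\lt{g_{i}},\lt{g_{j}})$, which is precisely the $S$-representation requirement.

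For (B)$\Rightarrow$(A), I show that every nonzero $p\in\langle G\rangle$ has some $\lt{g_{i}}$ dividing $\lt{p}$. Among all representations $p=h_{1}g_{1}+\cdots+h_{m}g_{m}$, pick one that minimizes $t:=\max_{\ordlt}\{\lt{h_{i}g_{i}}:h_{i}\neq 0\}$, and, secondarily, minimizes $|I|$, where $I:=\{i:h_{i}\neq 0,\ \lt{h_{i}g_{i}}=t\}$. Clearly $t\ordge\lt{p}$. If $t=\lt{p}$ then some $\lt{h_{i}g_{i}}=\lt{p}$, yielding $\lt{g_{i}}\mid\lt{p}$. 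Otherwise $t\ordgt\lt{p}$, forcing the leading terms at height $t$ to cancel, so $|I|\ge 2$. Fix $i<j$ in $I$, write $t=u\cdot\lcm(\lt{g_{i}},\lt{g_{j}})$, and rewrite $h_{i}g_{i}+h_{j}g_{j}$ as a scalar multiple of $u\cdot\SPoly{g_{i},g_{j}}$ plus a tail whose leading term is strictly below $t$. Substituting the hypothesized $S$-representation of $\SPoly{g_{i},g_{j}}$ (scaled by $u$) in place of $u\cdot\SPoly{g_{i},g_{j}}$ produces a new representation of $p$ whose $t$-value either drops strictly (contradicting minimality of $t$) or agrees with the old one but strictly reduces $|I|$ (contradicting minimality of $|I|$). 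Thus $t=\lt{p}$ and (A) holds.

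The main obstacle is the bookkeeping in the substitution step of (B)$\Rightarrow$(A): one must verify that the scaled $S$-representation of $\SPoly{g_{i},g_{j}}$, when spliced back into the representation of $p$, leaves every new contribution with leading term $\ordlt t$, and that the two summands $h_{i}g_{i}$ and $h_{j}g_{j}$ are genuinely replaced by smaller-order terms. The standard technique is to equate $\lc{h_{i}g_{i}}\,h_{i}g_{i}+\lc{h_{j}g_{j}}\,h_{j}g_{j}=\lc{h_{i}g_{i}}\lc{g_{j}}\cdot u\cdot\SPoly{g_{i},g_{j}}$ modulo terms strictly below $t$ (using that both sides share the leading term cancelling at $t$), after which the $S$-representation bound $u\cdot\lt{S\text{-rep}}\ordlt u\cdot\lcm(\lt{g_{i}},\lt{g_{j}})=t$ delivers the required strict decrease. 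The well-foundedness of the lexicographic order on $(t,|I|)$ then guarantees termination of this rewriting process.
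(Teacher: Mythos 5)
First, a point of comparison: the paper does not prove this theorem at all --- it is quoted as Lazard's characterization with a citation to \cite{Lazard93} and then used as a black box --- so there is no in-paper argument to measure you against. Your plan is the standard textbook one. The (A)$\Rightarrow$(B) half is fine: a reduction to zero, with its monomial cofactors regrouped by index, yields a $\lt{\SPoly{g_{i},g_{j}}}$-representation, and $\lt{\SPoly{g_{i},g_{j}}}\ordlt\lcm\left(\lt{g_{i}},\lt{g_{j}}\right)$ because the two leading summands in the definition of the $S$-polynomial cancel (treat the trivial case $\SPoly{g_{i},g_{j}}=0$ separately, where the zero vector serves). The minimal-$(t,\left|I\right|)$ rewriting argument for (B)$\Rightarrow$(A) is also the right route, with well-foundedness supplied by the fact that an admissible ordering well-orders the terms.

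However, the identity you offer for the crucial splicing step is incorrect. Since $\lt{h_{i}}\lt{g_{i}}=\lt{h_{j}}\lt{g_{j}}=t=u\cdot\lcm\left(\lt{g_{i}},\lt{g_{j}}\right)$, we have $\lt{h_{i}}=u\sigma_{g_{i},g_{j}}$ and $\lt{h_{j}}=u\sigma_{g_{j},g_{i}}$, and the correct relation is
\[
u\cdot\SPoly{g_{i},g_{j}}=\lc{g_{j}}\lt{h_{i}}\,g_{i}-\lc{g_{i}}\lt{h_{j}}\,g_{j}.
\]
Your displayed equation $\lc{h_{i}g_{i}}h_{i}g_{i}+\lc{h_{j}g_{j}}h_{j}g_{j}\equiv\lc{h_{i}g_{i}}\lc{g_{j}}\cdot u\cdot\SPoly{g_{i},g_{j}}$ modulo terms $\ordlt t$ cannot hold: modulo such terms the right-hand side vanishes, while the left-hand side is $\left(\lc{h_{i}g_{i}}^{2}+\lc{h_{j}g_{j}}^{2}\right)t\neq0$. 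Likewise, the claim that $h_{i}g_{i}+h_{j}g_{j}$ equals a scalar multiple of $u\cdot\SPoly{g_{i},g_{j}}$ plus a tail strictly below $t$ is true only when $\lc{h_{i}g_{i}}+\lc{h_{j}g_{j}}=0$, i.e.\ when the cancellation at height $t$ involves only these two summands; for $\left|I\right|>2$ it generally fails. The standard repair: subtract $\frac{\lc{h_{i}}}{\lc{g_{j}}}\,u\cdot\SPoly{g_{i},g_{j}}$ from the representation, which (by the identity above) deletes the leading monomial of $h_{i}$ and adds the single monomial $\frac{\lc{h_{i}}\lc{g_{i}}}{\lc{g_{j}}}\lt{h_{j}}$ to the $g_{j}$ slot, and then re-insert $\frac{\lc{h_{i}}}{\lc{g_{j}}}\,u$ times the hypothesized $S$-representation of $\SPoly{g_{i},g_{j}}$, every summand of which sits strictly below $u\cdot\lcm\left(\lt{g_{i}},\lt{g_{j}}\right)=t$. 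In the new representation all products are $\ordle t$, the index $i$ no longer attains $t$, and no index outside $I$ can newly attain $t$; the index $j$ may or may not remain. Hence either $t$ drops or $\left|I\right|$ drops by at least one (not necessarily two), which is exactly the dichotomy you assert, so the proof goes through once this bookkeeping is stated correctly.
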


It turns out that Buchberger's characterization implies Lazard's,
thanks to the following Lemma \cite{BWK93}:

\begin{lem}
\label{lem: reduction implies representation}Let $G\in\polyring^{m}$
and let $i,j$ satisfy $1\leq i<j\leq m$. Then (A)$\Longrightarrow$(B)
where
\begin{lyxlist}{(B)}
\item [{(A)}] $\SPoly{g_{i},g_{j}}$ reduces to zero with respect to $G$.
\item [{(B)}] $\SPoly{g_{i},g_{j}}$ has an $S$-re\-pre\-sen\-ta\-tion
with respect to $G$.\closer
\end{lyxlist}
\end{lem}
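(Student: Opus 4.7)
The strategy is to convert any reduction-to-zero expression of $\SPoly{g_{i},g_{j}}$ into an $S$-representation by aggregating monomial multipliers of the same generator. Let $p = \SPoly{g_{i},g_{j}}$ and fix monomials $q_{1},\ldots,q_{r}$ and indices $\nu_{1},\ldots,\nu_{r}$ satisfying the three bullets of the definition of reduction to zero. For each $\ell \in \{1,\ldots,m\}$ set
\[
    h_{\ell} = \sum_{k \,:\, \nu_{k} = \ell} q_{k},
\]
so that $\sum_{\ell=1}^{m} h_{\ell} g_{\ell} = \sum_{k=1}^{r} q_{k} g_{\nu_{k}} = p$ by construction. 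All that remains is to bound the leading terms of the summands.

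The central estimate, which I would prove by induction on $k$, is that $\lt{q_{k} g_{\nu_{k}}} \ordle \lt{p}$ for every $k$. The base case $k=1$ is immediate from the second bullet in the definition of reduction. For $k > 1$, the third bullet exhibits $\lt{q_{k} g_{\nu_{k}}}$ as a term of $p - \sum_{s<k} q_{s} g_{\nu_{s}}$; by the inductive hypothesis each $\lt{q_{s} g_{\nu_{s}}} \ordle \lt{p}$, and since every term of a polynomial is dominated by its leading term, every term of each $q_{s} g_{\nu_{s}}$ is $\ordle \lt{p}$. Hence every term of $p - \sum_{s<k} q_{s} g_{\nu_{s}}$ is $\ordle \lt{p}$, yielding $\lt{q_{k} g_{\nu_{k}}} \ordle \lt{p}$. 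Because the leading monomials of $g_{i}$ and $g_{j}$ cancel in the definition of $\SPoly{g_{i},g_{j}}$, we also have $\lt{p} \ordlt \lcm\left(\lt{g_{i}},\lt{g_{j}}\right)$.

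Combining Proposition~\ref{pro: properties of lts}(A) and (B) applied to the defining sum for $h_{\ell}$ gives
\[
    \lt{h_{\ell} g_{\ell}} \ordle \max_{k \,:\, \nu_{k} = \ell} \lt{q_{k} g_{\nu_{k}}} \ordle \lt{p}
\]
whenever $h_{\ell} \neq 0$. Taking $t = \lt{p}$ (or, in the trivial case $p = 0$, setting $\mathbf{h} = 0$ and choosing any $t$ below the lcm) shows that $\mathbf{h} = (h_{1},\ldots,h_{m})$ is a $t$-representation of $\SPoly{g_{i},g_{j}}$ with $t \ordlt \lcm\left(\lt{g_{i}},\lt{g_{j}}\right)$, and hence an $S$-representation. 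The only mildly delicate point is the inductive bound; the care required is to recognize that one must dominate \emph{every} term of each $q_{s} g_{\nu_{s}}$ by $\lt{p}$, not merely its leading term, so that subtracting these polynomials from $p$ cannot introduce anything larger than $\lt{p}$.
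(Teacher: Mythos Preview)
Your argument is correct. Note, however, that the paper does not actually supply a proof of this lemma: it simply states the result and cites \cite{BWK93}. Your write-up is essentially the standard argument one finds in such references---unpack the reduction-to-zero data, bound each step by $\lt{p}$ via the obvious induction, then collect terms by generator---so there is nothing to compare against here beyond confirming that you have recovered the textbook proof.
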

\noindent However, the converse of Lemma~\ref{lem: reduction implies representation} is known to be false,
so the fact that Lazard's characterization implies Buchberger's is not obvious.
It depends on the fact that in Lazard's characterization, \emph{every} pair $(i,j)$
has an $S$-repre\-sen\-ta\-tion for $\SPoly{g_i,g_j}$,
whereas Lemma ~\ref{lem: reduction implies representation} deals only with \emph{one} $S$-repre\-sen\-ta\-tion.

We can now show how Theorem~\ref{thm: completion of sufficiency}
proves the Main Theorem.

\begin{proof}
[Proof of Main Theorem]That (A) implies (B) is trivial, so we assume
(B) and show (A). To prove (A), we will employ Lazard's Characterization.

From (B), every pair $\left(i,j\right)$ satisfies one of (B0)---(B3).
Let $i,j$ be such that $1\leq i<j\leq m$. Clearly $\SPoly{g_{i},g_{j}}$
has an $S$-re\-pre\-sen\-ta\-tion:
\begin{itemize}
\item if $\left(i,j\right)$ satisfies (B0), then by Lemma~\ref{lem: reduction implies representation};
\item if $\left(i,j\right)$ satisfies (B1) or (B2), then by well-known
results \cite{BWK93,Adams94,CLO97};
\item if $\left(i,j\right)$ satisfies (B3), then by Theorem~\ref{thm: completion of sufficiency}.
\end{itemize}
\noindent By Lazard's Characterization (Theorem \ref{thm: Lazards characterization}),
$G$ is a Gr\"obner basis with respect to $\ordering$.
\end{proof}

\section{\label{sec: class of systems}``Pham-like'' systems}

In this section, we describe a class of polynomial systems for which
the Extended Criterion provides a dramatic reduction in the number
of $S$-poly\-nomial computations required for verification (Corollary~\ref{cor: dramatic reduction}).

A well-studied system of polynomials is the \emph{Pham} system \cite[Chapter~6, p.~147]{Tapas}.

\begin{dfn}
[Pham system]Let $P\in\mathbb{F}\left[x_{1},x_{2},\ldots,x_{n}\right]^{n}$.
We say that $P$ is a \emph{Pham system} if $\lt{p_{i}}$ and $\lt{p_{j}}$
are relatively prime whenever $i\neq j$.\closer
\end{dfn}
Thanks to Theorem~\ref{thm:Buchberger's Criteria},
one can verify that any Pham system is a Gr\"obner basis without
checking any $S$-poly\-nomials at all. Now we obfuscate matters
somewhat through multiplication.

\begin{dfn}[Pham-like systems]\label{def:Pham-like system}
Suppose that $G=\left(g_{1},\ldots,g_{m}\right)$
has leading terms $\left(c_{1}d,\ldots,c_{m}d\right)$ where for all
$i=1,\ldots,m$,
\begin{itemize}
\item $c_{i}$ and $d$ are relatively prime, \emph{and}
\item for all $j\neq i$, $c_{i}$ and $c_{j}$ are relatively prime.
\end{itemize}
We call such $G$ a \emph{Pham-like system.\closer}
\end{dfn}
Consider the following question.

\begin{center}
\emph{Is a Pham-like system a Gr\"obner basis?}
\par\end{center}

\noindent The temptation may arise to answer in the affirmative, because
the cofactors of the leading terms' gcd are relatively prime,
which through some manipulation might allow Buchberger's gcd Criterion
to apply.
\emph{It does not.}
Numerous systems are not Gr\"obner bases even though this property
is true; for example,\begin{equation*}
g_{1}=xy+y,\quad g_{2}=xz.\label{eq: pham-like, not gb}\end{equation*}
So deciding whether $G$ is a Gr\"obner basis requires us to check
whether the $S$-poly\-nomials reduce to zero. We would like to avoid
checking all of them if possible.

To that end, we turn first to Buchberger's Criteria, but

\begin{itemize}
\item none of the leading terms $c_{i}d$, $c_{j}d$ are relatively prime;
and
\item for any pair $c_{i}d$ and $c_{j}d$, no $c_{k}d$ divides their lcm.
\end{itemize}
If we were to rely only on Buchberger's Criteria, we would have to
reduce all $m\left(m-1\right)/2$ $S$-poly\-nomials to zero to see
that a Pham-like system is a Gr\"obner basis.

However, the Extended Criterion allows us
to decide whether a Pham-like system is a Gr\"obner basis by checking
at most $m-1$ $S$-poly\-nomials,
\emph{even though Buchberger's Criteria provide no benefit.}

\begin{cor}
\label{cor: dramatic reduction}Let $G\in\polyring^{m}$ be a Pham-like
system. The following are equivalent:

(A) $G$ is a Gr\"obner basis with respect to $\ordlt$.

(B) The $S$-poly\-nomials $\SPoly{g_{1},g_{2}}$, $\SPoly{g_{2},g_{3}}$,
\ldots{}, $\SPoly{g_{m-1},g_{m}}$ reduce to zero with respect to
$G$.\closer
\end{cor}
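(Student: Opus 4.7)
The direction (A)$\Rightarrow$(B) is immediate, since every $S$-polynomial of a Gr\"obner basis reduces to zero. For (B)$\Rightarrow$(A), my plan is to invoke the Main Theorem. Fix a pair $(i,j)$ with $i<j$. If $j=i+1$, condition (B0) holds by hypothesis, so suppose $j>i+1$ and aim to verify (B3) with chain $(i, i+1, \ldots, j)$ and $G' = (g_i, g_{i+1}, \ldots, g_j)$.

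First I verify that the leading terms $(\lt{g_i}, \ldots, \lt{g_j}) = (c_i d, \ldots, c_j d)$ satisfy the Extended Criterion. For (EDiv), since $c_i$ and $c_j$ are relatively prime, $\gcd(\lt{g_i}, \lt{g_j}) = d$, which trivially divides each $\lt{g_k} = c_k d$. For (EVar), fix any variable $x$: if $\deg_x d = 0$ the condition holds vacuously; otherwise the coprimality of each $c_k$ with $d$ forces $\deg_x c_k = 0$, so $(\deg_x \lt{g_k})_{k=i}^j$ is the constant sequence $(\deg_x d)$ and hence monotonic.

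The delicate step is to show each consecutive $\SPoly{g_\ell, g_{\ell+1}}$ (for $i \leq \ell < j$) reduces to zero with respect to $G'$, knowing only that it reduces with respect to $G$. I plan to prove the stronger claim that in any Pham-like system, $\SPoly{g_\ell, g_{\ell+1}}$ already reduces to zero with respect to $\{g_\ell, g_{\ell+1}\}$ alone; this passes to any $G' \supseteq \{g_\ell, g_{\ell+1}\}$ trivially. The key observation is that after the leading cancellation, $\SPoly{g_\ell, g_{\ell+1}} = \lc{g_{\ell+1}} c_{\ell+1}(g_\ell - \lc{g_\ell}\lt{g_\ell}) - \lc{g_\ell} c_\ell (g_{\ell+1} - \lc{g_{\ell+1}}\lt{g_{\ell+1}})$, so every term has the form $c_{\ell+1} s$ (where $s$ is a term from the tail of $g_\ell$, hence $s \prec c_\ell d$) or the symmetric $c_\ell s'$. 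For such a term $c_{\ell+1} s$ to be divisible by $\lt{g_k} = c_k d$ with $k \notin \{\ell, \ell+1\}$, the pairwise coprimality of the $c$'s and the coprimality of $c_{\ell+1}$ with $d$ force first $c_k \mid s$ and then $d \mid s$; but $d \mid s$ already implies that $\lt{g_{\ell+1}} = c_{\ell+1} d$ divides $c_{\ell+1} s$. Hence any candidate reduction by an ``outside'' $g_k$ can equally well be performed by $g_\ell$ or $g_{\ell+1}$.

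The main obstacle is propagating this observation past the first reduction step, since reducing by $g_{\ell+1}$ introduces new terms coming from its tail which need not retain the $c_\ell$- or $c_{\ell+1}$-structure. I expect to handle this by induction on the length of the reduction sequence, maintaining as invariant that every reducible term of the running remainder can be reduced by $g_\ell$ or $g_{\ell+1}$; the base case is the analysis just performed, and the inductive step requires tracking how the ``tail contributions'' $(s/d) r_{\ell+1}$ (and the symmetric version) interact with the pairwise-coprime structure. Once the claim is in hand, condition (B3) is verified for every $(i,j)$ with $j>i+1$, and the Main Theorem delivers that $G$ is a Gr\"obner basis.
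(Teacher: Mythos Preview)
Your plan to invoke the Main Theorem is natural, and your verification of the Extended Criterion for the chain $(c_i d,\ldots,c_j d)$ is correct. The gap lies exactly where you flag it: the claim that $\SPoly{g_\ell,g_{\ell+1}}$ reduces to zero with respect to $\{g_\ell,g_{\ell+1}\}$ alone. Your first-step analysis is sound, but the invariant you propose---that every reducible term of the running remainder can be reduced by $g_\ell$ or $g_{\ell+1}$---does not survive even one reduction. Take $d=x$, $c_1=y$, $c_2=z$, $c_3=w$ under lex with $z\ordgt y\ordgt w\ordgt x$, and set $g_1=xy+xw$, $g_2=xz+x$, $g_3=xw$. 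Then $\SPoly{g_1,g_2}=xzw-xy$; reducing the leading term by $g_2$ leaves $-xy-xw$, and the term $-xw$ is divisible by $\lt{g_3}$ but by neither $\lt{g_1}$ nor $\lt{g_2}$. (The reduction still terminates here because the \emph{other} term $-xy$ happens to be reducible by $g_1$, but your stated invariant is already false.) The point is that a reduction by $g_{\ell+1}$ injects tail terms of the shape $(s/d)\cdot r$, where $r$ ranges over the tail of $g_{\ell+1}$; these carry no $c_\ell$- or $c_{\ell+1}$-structure, so there is no evident bookkeeping that ``tracks the tail contributions'' as you hope.

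The paper sidesteps this by \emph{not} routing through the Main Theorem. It works instead with $S$-representations---which, unlike reductions to zero, behave well under the algebraic manipulations needed---via Theorem~\ref{thm: completion of sufficiency} and Lemma~\ref{lem: Chain implies gcd commutes}. From hypothesis (B) and repeated use of these tools (applied to several reorderings of the chain) it extracts a common factorisation $g_i=f_ip$ with $\lt{p}=d$ and the $\lt{f_i}=c_i$ pairwise coprime; Buchberger's gcd criterion then makes $F=(f_1,\ldots,f_m)$ a Gr\"obner basis, and multiplying the resulting $S$-representations by $p$ yields $S$-representations for every pair of $G$, so Lazard's characterization finishes. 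Note that your intermediate claim---that each $\{g_\ell,g_{\ell+1}\}$ is itself a Gr\"obner basis---is a \emph{consequence} of this common factorisation, not a stepping stone toward it: once $g_\ell=f_\ell p$ and $g_{\ell+1}=f_{\ell+1}p$ with coprime leading cofactors, the pair is trivially a Gr\"obner basis. So completing your route appears to require establishing the factorisation first, which is precisely the paper's argument.
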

\begin{proof}
That (A) implies (B) is trivial, so we assume (B) and show (A). From
(B), we know that $\SPoly{g_{1},g_{2}}$, $\SPoly{g_{2},g_{3}}$,
\ldots{}, and $\SPoly{g_{m-1},g_{m}}$ reduce to zero with respect
to $G$. It follows from Lemma~\ref{lem: reduction implies representation}
that they have $S$-re\-pre\-sen\-ta\-tions with respect to $G$.

For the sake of convenience, denote $\lt{g_{i}}$ by $t_{i}$.
Write $t_i = c_i d$ where $c_i$ and $d$ are as in
Definition~\ref{def:Pham-like system}.
Recall that $\gcd\left(c_{i},t_{j}\right)=1$ whenever $i\neq j$;
inspection shows that the list of terms $\left(t_{1},t_{2},\ldots,t_{m}\right)$
satisfies the Extended Criterion. By Theorem~\ref{thm: completion of sufficiency},
$\SPoly{g_{1},g_{m}}$ has an $S$-re\-pre\-sen\-ta\-tion with
respect to $G$. Let $p_{1,m}=\gcd\left(g_{1},g_{m}\right)$ and choose
$f_{1},f_{m}\in\polyring$ such that
\begin{itemize}
\item $g_{1}=f_{1}p_{1,m}$, and
\item $g_{m}=f_{m}p_{1,m}$.
\end{itemize}
\noindent Recall Lemma \ref{lem: Chain implies gcd commutes} and
the assumption that $c_{1}$ is relatively prime to $t_{m}$; then\[
d=\gcd\left(c_{1}d,c_{m}d\right)=\gcd\left(\lt{g_{1}},\lt{g_{m}}\right)=\lt{p_{1,m}}.\]
Thus\[
c_{1}d=t_{1}=\lt{g_{1}}=\lt{f_{1}p_{1,m}}=\lt{f_{1}}\lt{p_{1,m}}=\lt{f_{1}}d,\]
whence $c_{1}=\lt{f_{1}}$. Similarly, $c_{m}=\lt{f_{m}}$.

Inspection shows that the list of terms $\left(t_{1},t_{m},t_{m-1},\ldots,t_{3},t_{2}\right)$
also satisfies the Extended Criterion. We now know that $\SPoly{g_{1},g_{m}}$
has an $S$-re\-pre\-sen\-ta\-tion with respect to $G$, so we
can reason as before that there exist $\varphi_{1},\varphi_{2},p_{1,2}\in\polyring$
such that
\begin{itemize}
\item $g_{1}=\varphi_{1}p_{1,2}$,
\item $g_{2}=\varphi_{2}p_{1,2}$,
\item $p_{1,2}=\gcd\left(g_{1},g_{2}\right)$, and
\item the leading terms of $\varphi_{1}$ and $\varphi_{2}$ are relatively
prime.
\end{itemize}
\noindent As before, we obtain $d=\lt{p_{1,2}}$ and $c_{1}=\lt{\varphi_{1}}$.
Thus $\lt{f_{1}}=\lt{\varphi_{1}}$. We claim that in fact $f_{1}=\varphi_{1}$.
By way of contradiction, assume that $f_{1}$ and $\varphi_{1}$ are
not equal. From $f_{1}p_{1,m}=\varphi_{1}p_{1,2}$ we conclude that
$f_{1}$ has a common factor with $p_{1,2}$ or $\varphi_{1}$ has
a common factor with $p_{1,m}$---but this contradicts the hypothesis
that $c_{1}$ is relatively prime to $d$. Hence $f_{1}=\varphi_{1}$
and $p_{1,m}=p_{1,2}$. Write $p=p_{1,m}$, $g_{1}=f_{1}p$, $g_{2}=f_{2}p$,
and $g_{m}=f_{m}p$.

Proceeding in like fashion, we can factor every $g_{i}$ as $g_{i}=f_{i}p$
such that $\lt{f_{i}}$ and $\lt{f_{j}}$ are relatively prime whenever
$i\neq j$. By Theorem~\ref{thm:Buchberger's Criteria}, $F=\left(f_{1},f_{2},\ldots,f_{m}\right)$
is a Gr\"obner basis with respect to $\ordlt$. Let $i,j$ be arbitrary,
but fixed. Assume $1\leq i<j\leq m$. By Lazard's Characterization,
$\SPoly{f_{i},f_{j}}$ has an $S$-re\-pre-sen\-ta\-tion $\mathbf{h}^{\left(i,j\right)}$.
This implies that $\SPoly{g_{i},g_{j}}$ has an $S$-re\-pre\-sen\-ta\-tion
$p\mathbf{h}^{\left(i,j\right)}=\left(ph_{1}^{\left(i,j\right)},\ldots,ph_{m}^{\left(i,j\right)}\right)$.
Since $i$ and $j$ are arbitrary, by Lazard's Characterization $G$
is a Gr\"obner basis with respect to $\ordlt$.
\end{proof}

\section{Acknowledgment}

The author would like to thank the anonymous referee whose comments greatly improved
the quality of the paper.

\bibliographystyle{jcm}
\bibliography{/home/perry/common/Research/researchbibliography}

\end{document}